\theoremstyle{plain}
\newtheorem{thm}{\protect\theoremname}
  \theoremstyle{remark}
  \newtheorem{rem}[thm]{\protect\remarkname}
  \theoremstyle{plain}
  \newtheorem{lem}[thm]{\protect\lemmaname}
  \theoremstyle{plain}
  \newtheorem{prop}[thm]{\protect\propositionname}
\DeclareMathOperator{\cat}{cat}
  \providecommand{\lemmaname}{Lemma}
  \providecommand{\propositionname}{Proposition}
  \providecommand{\remarkname}{Remark}
\providecommand{\theoremname}{Theorem}
\begin{document}

\title[Positive solutions for Schroedinger Maxwell systems]{Positive solutions for double singularly perturbed Schroedinger Maxwell systems}
\thanks{The authors were  supported by Gruppo Nazionale per l'Analisi Matematica, la Probabilit\`{a} e le loro 
Applicazioni (GNAMPA) of Istituto Nazionale di Alta Matematica (INdAM)}

\author{Marco Ghimenti}
\address{M. Ghimenti, \newline Dipartimento di Matematica Universit\`a di Pisa
Largo B. Pontecorvo 5, 56100 Pisa, Italy}
\email{marco.ghimenti@unipi.it}

\author{Anna Maria Micheletti}
\address{A. M. Micheletti, \newline Dipartimento di Matematica Universit\`a di Pisa
Largo B. Pontecorvo 5, 56100 Pisa, Italy}
\email{a.micheletti@dma.unipi.it.}

\begin{abstract}
We show that the number of solutions of a double singularly perturbed
Schroedinger Maxwell system on a smooth bounded domain $\Omega\subset\mathbb{R}^{3}$
depends on the topological properties of the domain. In particular
if $\Omega$ is non contractible we obtain $\cat(\Omega)+1$ positive
solutions. The result is obtained via Lusternik Schnirelmann category
theory
\end{abstract}

\keywords{Schroedinger Maxwell systems, Schroedinger Poisson Slater equation, singular perturbation, Ljusternik-Schnirelman category}

\subjclass[2010]{35J60,35J47,58E05,81V10}

\maketitle

\section{Introduction}

Given real numbers $q>0$, $\omega>0$ we consider the following Schroedinger
Maxwell stationary system on a smooth bounded domain $\Omega\subset\mathbb{R}^{3}$.
\begin{equation}
\left\{ \begin{array}{cc}
-\varepsilon^{2}\Delta u+u+\omega uv=|u|^{p-2}u & \text{ in }\Omega\\
-\varepsilon^{2}\Delta v=qu^{2} & \text{ in }\Omega\\
u,v=0\text{ on }\partial\Omega,\ u>0\text{ in }\Omega
\end{array}\right.\label{eq:sms}
\end{equation}

We want to prove that when the parameter $\varepsilon$ is sufficiently
small, there are many low energy solution of (\ref{eq:sms}). In particular
the number of solutions of (\ref{eq:sms}) is related to the topology
of the bounded set $\Omega$. 

Schroedinger Maxwell systems recently received considerable attention
from the mathematical community \cite{AP,BF,D,DW1,DW2,He,K,R}.
For a special case of stationary Schroedinger Maxwell type systems,
namely when the system is set in $\mathbb{R}^{3}$, we have an esplicit
expression for the function $v$
\[
v(u)=\frac{q}{4\pi}\int_{\mathbb{R}^{3}}\frac{u^{2}(y)}{|x-y|}dy,
\]
and the system is reduced to the following single nonlinear equation:
\[
-\Delta u+u+\frac{\omega q}{4\pi}\left(u^{2}\ast\frac{1}{|x|}\right)u=|u|^{p-2}u.
\]
This equation is also referred as Schroedinger-Poisson-Slater equation
and arises in the Slater approximation in the Hartree-Fock model (see
\cite{AR,BJL,He,IR,IV,PS,R10,S} and the reference therein). 

Coming back to the initial sistem, the singular perturbation of the
first equation is widely analysed in literature - we cite, for instance,
\cite{BDM,DW1,DW2,GM1} and the reference therein. More recently
the mathematical community moved to consider the double perturbed
problem \cite{GM15b,He,HZ,WTXZ,Y}, that is when the singular parameter
appears in both equations. In \cite{DS} the authors study the evolution
of a Schroedinger-Newton system, and it turns out that the double
perturbation is needed in order to prove the dynamics of solitary
waves when the parameters tend to zero.

Concerning existence of solutions, He \cite{He} studies the following
problem 
\[
\left\{ \begin{array}{cc}
-\varepsilon^{2}\Delta u+V(x)u+uv=f(u) & \text{ in }\mathbb{R}^{3}\\
-\varepsilon^{2}\Delta v=qu^{2} & \text{ in }\mathbb{R}^{3}\\
u>0
\end{array}\right.
\]
where $f$ is a subcritical nonlinearity and $V$ is a suitable potential,
while Yang \cite{Y} is interested to the system with critical nonlinearity
\[
\left\{ \begin{array}{cc}
-\varepsilon^{2}\Delta u+V(x)u+K(x)uv=P(x)g(u)+Q(x)|u|^{4}u & \text{ in }\mathbb{R}^{3}\\
-\varepsilon^{2}\Delta v=qu^{2} & \text{ in }\mathbb{R}^{3}\\
u>0
\end{array}\right.
\]
where $V,K,P,Q$ are suitable nonhomogeneous potentials. In both cases
the existence and multiplicity of solution is given by the properties
of the functions $V,K,P,Q$. The role of the topological properties
of the domain on the existence of solution is studied in \cite{GM15b},
in which a double perturbed nonlinear system is solved on a Riemannian
manifold without boundary. In all these papers a key role is played
by the limit problem of the type
\[
\left\{ \begin{array}{cc}
-\Delta u+u+\omega uv=|u|^{p-2}u & \text{ in }\mathbb{R}^{3}\\
-\Delta v=qu^{2} & \text{ in }\mathbb{R}^{3}\\
u>0\text{ in }\mathbb{R}^{3}
\end{array}\right.
\]
and the ground state solutions of this problem will provide a model
profile to construct solution for the original problem. 

The main difference when the domain has a boundary comes out when
looking for the limit problem. In fact, blowing down around a point
on the boundary $q\in\partial\Omega$ leads to a problem settled in
the half space. The main features of the limit problems in $\mathbb{R}^{3}$
and in the half space are recalled in Section \ref{subsec:The-limit-problem}
and will be crucial for our result.

Our main results is the following.
\begin{thm}
\label{thm:1}Let $4<p<6$. For $\varepsilon$ small enough there
exist at least $\cat(\Omega)$ low energy positive solutions of (\ref{eq:sms})
. Moreover if $\Omega$ is non contractible there is another positive
solution with higher energy.
\end{thm}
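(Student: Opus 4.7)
The plan is to prove Theorem \ref{thm:1} by the Lusternik--Schnirelmann photography method, applied to the Nehari manifold of a reduced one-variable functional. As a first step, for each $u \in H^{1}_{0}(\Omega)$ the second equation $-\varepsilon^{2}\Delta v = qu^{2}$, $v|_{\partial\Omega}=0$, has a unique positive solution $v = \Phi_{\varepsilon}(u)$, so (\ref{eq:sms}) is equivalent to finding critical points of
\[
I_{\varepsilon}(u) = \frac{1}{2}\int_{\Omega}\bigl(\varepsilon^{2}|\nabla u|^{2} + u^{2}\bigr)\,dx + \frac{\omega}{4}\int_{\Omega}\Phi_{\varepsilon}(u)\,u^{2}\,dx - \frac{1}{p}\int_{\Omega}(u^{+})^{p}\,dx,
\]
where the truncation $u^{+}$ is used so that critical points are positive by the maximum principle. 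The assumption $4<p<6$ is precisely what makes the Nehari manifold $\mathcal{N}_{\varepsilon} = \{u \neq 0 : \langle I'_{\varepsilon}(u), u\rangle = 0\}$ a natural $C^{1}$ constraint on which $I_{\varepsilon}$ is bounded below and coercive, and ensures the Palais--Smale condition at energy levels below a second natural threshold.

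Next I would invoke the two limit problems of Section \ref{subsec:The-limit-problem}. Let $m_{\infty}$ denote the ground-state level of the limit system in $\mathbb{R}^{3}$ and $m_{H}$ the corresponding level on the half-space. The crucial input, which I assume available from that section, is the strict gap $m_{H} > m_{\infty}$: it will force low-energy solutions to concentrate at interior points of $\Omega$, not on $\partial\Omega$. Using the $\mathbb{R}^{3}$ ground-state profile $U$, I would then build a photography map $\Phi_{\varepsilon} : \Omega_{r}^{-} \to \mathcal{N}_{\varepsilon}$ by transplanting $U\bigl((x-\xi)/\varepsilon\bigr)$, suitably cut off and Nehari-projected, at each $\xi \in \Omega_{r}^{-} = \{x \in \Omega : \operatorname{dist}(x,\partial\Omega) > r\}$, and establish the uniform asymptotic $I_{\varepsilon}(\Phi_{\varepsilon}(\xi)) = m_{\infty} + o(1)$ as $\varepsilon \to 0$.

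In the reverse direction I would construct a barycenter-type map $\beta : \{u \in \mathcal{N}_{\varepsilon} : I_{\varepsilon}(u) \le m_{\infty}+\delta\} \to \Omega_{r}^{+}$ into an outer neighborhood of $\Omega$ with $\cat(\Omega_{r}^{+}) = \cat(\Omega)$. A concentration--compactness analysis exploiting $m_{H} > m_{\infty}$ to exclude boundary profiles shows that, for $\varepsilon$ and $\delta$ small enough, every $u$ in that sublevel is $L^{p}$-close to a single rescaled translate of $U$ centered in the interior, so $\beta$ is well defined and $\beta \circ \Phi_{\varepsilon}$ is homotopic to the inclusion $\Omega \hookrightarrow \Omega_{r}^{+}$. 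The Lusternik--Schnirelmann inequality on $\mathcal{N}_{\varepsilon}$ then yields at least $\cat(\Omega_{r}^{+}) = \cat(\Omega)$ critical points of $I_{\varepsilon}$ in $\{I_{\varepsilon} \le m_{\infty}+\delta\}$, corresponding to the desired low-energy positive solutions of (\ref{eq:sms}).

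For the extra higher-energy solution in the non-contractible case I would run a Benci--Cerami-type argument: if no critical value of $I_{\varepsilon}$ on $\mathcal{N}_{\varepsilon}$ existed strictly above $m_{\infty}+\delta$ but below the second PS threshold, a pseudo-gradient deformation would retract the higher sublevel onto $\{I_{\varepsilon} \le m_{\infty}+\delta\}$; composing this deformation with $\Phi_{\varepsilon}$ and $\beta$ would then produce a contraction of $\Omega$ inside $\Omega_{r}^{+}$, contradicting the hypothesis. The main technical obstacle is not the Lusternik--Schnirelmann machinery itself but the sharp uniform asymptotics needed to feed into it: controlling the nonlocal coupling $\int_{\Omega}\Phi_{\varepsilon}(u)\,u^{2}\,dx$ under the singular rescaling $x = \xi + \varepsilon y$, and showing that low-energy PS sequences in the bounded domain still split as a single interior bubble despite the presence of \emph{two} singularly perturbed equations.
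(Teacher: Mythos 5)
Your overall architecture matches the paper's: Benci--Fortunato reduction, Nehari constraint (where $4<p<6$ gives boundedness below and Palais--Smale), a photography map built from the $\mathbb{R}^3$ ground state $U$, a barycenter map on the low sublevel, the Lusternik--Schnirelmann inequality, and a Benci--Cerami-type argument for the extra solution (the paper packages the latter as an explicit compact contractible set $T_\varepsilon$ with $\Phi_\varepsilon(\Omega^-)\subset T_\varepsilon\subset\mathcal{N}_\varepsilon\cap I_\varepsilon^{C}$, which is the same idea). However, your ``crucial input'' is wrong: there is no strict gap $m_H>m_\infty$ between the half-space and whole-space ground-state levels. For the Dirichlet problem on $\mathbb{R}^3_+$ the infimum of the energy over the half-space Nehari set \emph{equals} $m_\infty$: extending by zero shows $m_H\ge m_\infty$, while translating cut-off copies of $U$ away from $\partial\mathbb{R}^3_+$ shows $m_H\le m_\infty$. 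The infimum is simply not attained. So an argument that tries to exclude boundary concentration by comparing energy levels fails at exactly the step you rely on. What the paper uses instead (and what Section \ref{subsec:The-limit-problem} actually provides) is a nonexistence result of Esteban--Lions type, adapted to the nonvariational system: the half-space limit problem (\ref{eq:PLhalf}) admits \emph{only the trivial solution}. The concentration analysis (Lemmas \ref{lem:gamma} and \ref{lem:crucial}) then runs as follows: if a fixed fraction $\gamma$ of the $L^p$ mass stayed within $O(\varepsilon)$ of $\partial\Omega$, blowing up in Fermi coordinates at a boundary point would produce a nontrivial weak solution of (\ref{eq:PLhalf}), a contradiction. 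You need this nonexistence mechanism, not an energy gap.

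A secondary overclaim: you assert that every $u$ in the low sublevel is $L^p$-close to a single rescaled translate of $U$. Since uniqueness and nondegeneracy of the ground state of (\ref{eq:PL}) are open (the paper stresses this in its remarks), such a profile description is not available. Fortunately it is also not needed: the barycenter map only requires that most of the density $\Gamma(u)$ concentrates in a single ball of fixed radius centered at an interior point (Proposition \ref{prop:conc}), which follows from the partition argument plus the boundary-exclusion lemma above without identifying the limiting profile.
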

We recall that, given $X$ a topological space and a closed subset
$A\subset X$, we say that $A$ has category $k$ relative to $X$
($\cat_{X}A=k$) if $A$ is covered by $k$ closed sets $A_{j}$,
$j=1,\dots,k$, which are contractible in $X$, and $k$ is the minimum
integer with this property. We simply denote $\cat X=\cat_{X}X$. 
\begin{rem}
To prove our result, we construct two continuous operator, one - the
map $\Phi_{\varepsilon}$ - from the bounded set $\Omega$ to the
subset of low energy solution in $H_{0}^{1}(\Omega)$ and the second
-the barycenter map - from the subset of low energy solution in $H_{0}^{1}(\Omega)$
to the set $\Omega$, so that the composition is homotopically equivalent
to the identity map. A scheme of the proof of Theorem \ref{thm:1}
is given in Section \ref{sec:proof}. The main point of this paper
is contained in Section \ref{sec:Conc}: in fact, to define the barycenter
map we have to prove that a low energy function does not concentrate
near the boundary. This property relies on an adaptiation of the interesting
result by Esteban and Lions \cite{EL}, which state that a large class
of nonlinear elliptic partial differential equation with Dirichlet
boundary condition in the half space admits only the trivial solution. 
\end{rem}
\begin{rem}
It is interesting to ask what happens if we consider the case of Schroedinger
- Newton type equation, that is the case of equation (\ref{eq:sms})
with $\omega<0$. In this case the limit problem could be reduced,
by a simple change of variables, to a variational system. In this
case, the result of \cite{EL} applies directly. The other main difference
is that we are not able to prove the concentration result (i.e. Lemma
\ref{lem:gamma}) for the positive function $u^{+}$, but only for
$u$. Thus, it is not possible to state the final Theorem for positive
solutions. However, one can obtain a result of the type ``Problem
(\ref{eq:sms}) adimts at least $\cat(\Omega)/2$ pairs of low energy
solutions $(u,-u)$''.
\end{rem}
\begin{rem}
It is not known whether if a least energy solution of the Schroedinger
Maxwell system, or of the Schroedinger Poisson Slater equation is
unique or at least non degenerate (see \cite{IR,MV}). We want to
stress that the method we employ does not require any nondegeneracy
assumption: any ground state of the limit problem works perfectly
in the same way. A backing effect of the lack of non denegeracy, is
the obstruction to prove a multiplicity result by using finite dimensional
reduction, as the well known Liapunov-Schmidt procedure. Also, the
lack of uniqueness of ground state for the limit problem (\ref{eq:PL})
is an obstruction to describe the asymptotic profile of the low energy
solutions when $\varepsilon\rightarrow0$. For example, applying the
same tecnique we use in this paper, one can prove that any solution
of (\ref{eq:sms}) with sufficiently low energy has a maximum point
$P_{\varepsilon}$ with $\frac{d(P_{\varepsilon},\partial\Omega)}{\varepsilon}\rightarrow\infty$
as $\varepsilon\rightarrow0$, and that if $u_{\varepsilon}$ has
two maximum points $P_{\varepsilon}$ and $Q_{\varepsilon}$ then
$P_{\varepsilon}$ and $Q_{\varepsilon}$ collide while $\varepsilon\rightarrow0$.
Unfortunately, without any a priori knowledge of the limiting profiles,
we can not prove that the maximum point is indeed unique, and to provide
a precise description of the profile around $P_{\varepsilon}$. 
\end{rem}

\section{Preliminary results}

We endow $H_{0}^{1}(\Omega)$ and $L^{p}(\Omega)$ with the following
norms equivalent to the standard ones 
\begin{eqnarray*}
\|u\|_{\varepsilon}^{2}=\frac{1}{\varepsilon^{3}}\int_{\Omega}\left(\varepsilon^{2}|\nabla u|^{2}+u^{2}\right)dx &  & |u|_{\varepsilon,p}^{p}=\frac{1}{\varepsilon^{3}}\int_{\Omega}|u|^{p}dx\\
\|u\|_{H_{0}^{1}}^{2}=\int_{\Omega}|\nabla u|^{2}dx &  & |u|_{p}^{p}=\int_{\Omega}|u|^{p}dx
\end{eqnarray*}
and we refer to $H_{\varepsilon}$ (resp. $L_{\varepsilon}^{p}$)
as space $H_{0}^{1}(\Omega)$ (resp. $L_{\varepsilon}^{p}$) endowed
with the $\|\cdot\|_{\varepsilon}$ (resp. $|\cdot|_{\varepsilon,p}$)norm.
We refer to the scalar product on $H_{\varepsilon}$ as
\[
\left\langle u,v\right\rangle _{\varepsilon}=\frac{1}{\varepsilon^{3}}\int_{\Omega}\left(\varepsilon^{2}\nabla u\nabla v+uv\right)dx.
\]
Since Schroedinger Maxwell systems are not variational, in a pioneering
paper \cite{BF}, Benci and Fortunato introduced the map $\psi:H_{0}^{1}(\Omega)\rightarrow H_{0}^{1}(\Omega)$
that is the solution of the equation
\begin{equation}
-\Delta\psi(u)=qu^{2}\text{ in }\Omega\label{eq:psi}
\end{equation}
to reduce the system to a single nonlinear variational equation. We
hereafter summarize the main features of the map $\psi(u)$.
\begin{lem}
\label{lem:psi}The map $\psi:H_{0}^{1}(\Omega)\rightarrow H_{0}^{1}(\Omega)$
is positive, of class $C^{2}$ and its derivatives $\psi'(u)$ and
$\psi''(u)$ satisfy 
\begin{eqnarray}
-\Delta\psi'(u)[\varphi] & = & 2qu\varphi\label{eq:derprima}\\
-\Delta\psi''(u)[\varphi_{1},\varphi_{2}] & = & 2q\varphi_{1}\varphi_{2}\label{eq:derseconda}
\end{eqnarray}
for any $\varphi,\varphi_{1},\varphi_{2}\in H_{0}^{1}(\Omega)$.
\end{lem}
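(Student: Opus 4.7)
The plan is to realize $\psi$ as the composition $\psi = S \circ Q$, where $Q(u) = qu^{2}$ is a quadratic nonlinear map and $S = (-\Delta)^{-1}$ is the inverse Dirichlet Laplacian, and then to read off all three claims from properties of $S$ and $Q$ separately.

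First, I would set up the linear piece. The Dirichlet problem $-\Delta w = f$ on $\Omega$ with $w = 0$ on $\partial\Omega$ has, by Lax--Milgram, a unique solution $w = S(f) \in H_{0}^{1}(\Omega)$ for every $f \in H^{-1}(\Omega)$, and $S: H^{-1}(\Omega) \to H_{0}^{1}(\Omega)$ is a continuous linear isomorphism. Next, for the nonlinear piece, I would use the Sobolev embedding $H_{0}^{1}(\Omega) \hookrightarrow L^{6}(\Omega)$ to see that, for $u \in H_{0}^{1}(\Omega)$, one has $u^{2} \in L^{3}(\Omega) \hookrightarrow L^{6/5}(\Omega) \hookrightarrow H^{-1}(\Omega)$ (the last embedding being dual to $H_{0}^{1}\hookrightarrow L^{6}$, using that $\Omega$ is bounded). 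Thus $Q: H_{0}^{1}(\Omega) \to H^{-1}(\Omega)$, $Q(u) = qu^{2}$, is well defined, and $\psi := S \circ Q$ is a well-defined map $H_{0}^{1}(\Omega) \to H_{0}^{1}(\Omega)$ solving \eqref{eq:psi}.

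For positivity, I would simply note that $-\Delta\psi(u) = qu^{2} \ge 0$ with $\psi(u)|_{\partial\Omega}=0$, so the weak maximum principle gives $\psi(u) \ge 0$ in $\Omega$ (and the strong maximum principle gives strict positivity whenever $u\not\equiv 0$). For the regularity and derivative formulas, the key point is that $Q$ is a continuous quadratic (hence polynomial, hence $C^{\infty}$) map: expanding
\[
Q(u+\varphi) - Q(u) = 2qu\varphi + q\varphi^{2},
\]
the bilinear map $B(\varphi_{1},\varphi_{2}) = q\varphi_{1}\varphi_{2}$ is continuous from $H_{0}^{1}\times H_{0}^{1}$ into $L^{3} \hookrightarrow H^{-1}$ by Hölder combined with the same Sobolev embedding. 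Consequently $Q$ is Fréchet differentiable of every order with $Q'(u)[\varphi] = 2qu\varphi$, $Q''(u)[\varphi_{1},\varphi_{2}] = 2q\varphi_{1}\varphi_{2}$ and $Q^{(k)} = 0$ for $k \ge 3$. The chain rule and the linearity of $S$ then yield $\psi'(u)[\varphi] = S(2qu\varphi)$ and $\psi''(u)[\varphi_{1},\varphi_{2}] = S(2q\varphi_{1}\varphi_{2})$, which are exactly the statements \eqref{eq:derprima} and \eqref{eq:derseconda}.

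There is no real obstacle here; the only thing that requires attention is checking the Sobolev exponents so that products of $H_{0}^{1}$ functions are guaranteed to sit in $H^{-1}$, which is precisely where the restriction to dimension three (and the embedding $H_{0}^{1} \hookrightarrow L^{6}$) is used.
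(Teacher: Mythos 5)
Your argument is correct and is precisely the standard proof the paper alludes to (the authors simply write ``The proof is standard''): factor $\psi=S\circ Q$ with $S=(-\Delta)^{-1}$ linear bounded from $H^{-1}(\Omega)$ to $H_0^1(\Omega)$ and $Q(u)=qu^2$ a continuous quadratic map via $H_0^1\hookrightarrow L^6$, so that smoothness and the derivative formulas follow from the chain rule, and positivity from the maximum principle. Nothing is missing.
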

\begin{proof}
The proof is standard.
\end{proof}
\begin{rem}
We observe that by simple computation we have that a solution $\psi_{\varepsilon}(u)$
of the equation 
\begin{equation}
\left\{ \begin{array}{ccc}
-\varepsilon^{2}\Delta v=qu^{2} &  & \text{in }\Omega\\
v=0 &  & \text{on \ensuremath{\partial\Omega}}
\end{array}\right.\label{eq:psi-eps}
\end{equation}
can be obtained as $\psi_{\varepsilon}(u)=\psi\left(\frac{u}{\varepsilon}\right).$
The derivatives of $\psi_{\varepsilon}(u)$ thus satisfy
\begin{eqnarray}
-\varepsilon^{2}\Delta\psi_{\varepsilon}'(u)[\varphi] & = & 2qu\varphi\label{eq:derprima-eps}\\
-\varepsilon^{2}\Delta\psi_{\varepsilon}''(u)[\varphi_{1},\varphi_{2}] & = & 2q\varphi_{1}\varphi_{2}\label{eq:derseconda-eps}
\end{eqnarray}
\end{rem}
\begin{lem}
\label{lem:Tder}The map $T_{\varepsilon}:H_{\varepsilon}\rightarrow\mathbb{R}$
given by
\[
T_{\varepsilon}(u)=\int_{\Omega}u^{2}\psi_{\varepsilon}(u)dx
\]
is a $C^{2}$ map and its first derivative is 
\[
T_{\varepsilon}'(u)[\varphi]=4\int_{\Omega}\varphi u\psi_{\varepsilon}(u)dx.
\]
\end{lem}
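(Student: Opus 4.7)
The plan is to split $T_\varepsilon$ as the composition of the smooth map $u\mapsto(u,\psi_\varepsilon(u))$ with the trilinear form $(u,u,v)\mapsto\int_\Omega u^2 v\,dx$, and then collapse the two terms of the product rule into a single one by exploiting the self-adjointness of $-\varepsilon^2\Delta$ with Dirichlet data.

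First I would observe that by Lemma \ref{lem:psi} (transferred to $\psi_\varepsilon$ via the relation $\psi_\varepsilon(u)=\psi(u/\varepsilon)$) the map $\psi_\varepsilon:H_\varepsilon\to H_\varepsilon$ is $C^2$. Since $H_0^1(\Omega)\hookrightarrow L^6(\Omega)\hookrightarrow L^3(\Omega)$, the trilinear form $(w_1,w_2,w_3)\mapsto\int_\Omega w_1 w_2 w_3\,dx$ is continuous on $H_0^1(\Omega)^3$, hence smooth. Composition then gives that $T_\varepsilon\in C^2(H_\varepsilon,\mathbb{R})$, and by the product rule
\[
T_\varepsilon'(u)[\varphi] \;=\; 2\int_\Omega u\varphi\,\psi_\varepsilon(u)\,dx \;+\; \int_\Omega u^2\,\psi_\varepsilon'(u)[\varphi]\,dx.
\]

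The key step is to show that these two integrals are equal. Using (\ref{eq:psi-eps}), which written weakly says $\varepsilon^2\int_\Omega\nabla\psi_\varepsilon(u)\cdot\nabla\eta\,dx=q\int_\Omega u^2\eta\,dx$ for every $\eta\in H_0^1(\Omega)$, I would test with $\eta=\psi_\varepsilon'(u)[\varphi]$ to obtain
\[
q\int_\Omega u^2\,\psi_\varepsilon'(u)[\varphi]\,dx \;=\; \varepsilon^2\int_\Omega\nabla\psi_\varepsilon(u)\cdot\nabla\psi_\varepsilon'(u)[\varphi]\,dx.
\]
Symmetrically, testing the weak formulation of (\ref{eq:derprima-eps}), i.e.\ $\varepsilon^2\int_\Omega\nabla\psi_\varepsilon'(u)[\varphi]\cdot\nabla\eta\,dx=2q\int_\Omega u\varphi\,\eta\,dx$, with $\eta=\psi_\varepsilon(u)$ yields
\[
2q\int_\Omega u\varphi\,\psi_\varepsilon(u)\,dx \;=\; \varepsilon^2\int_\Omega\nabla\psi_\varepsilon'(u)[\varphi]\cdot\nabla\psi_\varepsilon(u)\,dx.
\]
Comparing the two right-hand sides we conclude that $\int_\Omega u^2\psi_\varepsilon'(u)[\varphi]\,dx = 2\int_\Omega u\varphi\,\psi_\varepsilon(u)\,dx$, and substituting back into the product-rule expression gives
\[
T_\varepsilon'(u)[\varphi] \;=\; 4\int_\Omega \varphi\,u\,\psi_\varepsilon(u)\,dx,
\]
as claimed.

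There is no real obstacle here: the only delicate point is checking that $\psi_\varepsilon(u)$ and $\psi_\varepsilon'(u)[\varphi]$ are legitimate test functions in each other's weak formulations, which is immediate because both lie in $H_0^1(\Omega)$ by Lemma \ref{lem:psi}. The $C^2$ regularity of $T_\varepsilon$ is then automatic from the chain rule, and the second derivative can be read off analogously from (\ref{eq:derseconda-eps}) if needed.
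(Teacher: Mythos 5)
Your proof is correct and follows essentially the same route as the paper: the product rule gives the two terms $2\int u\varphi\,\psi_{\varepsilon}(u)$ and $\int u^{2}\psi_{\varepsilon}'(u)[\varphi]$, and their equality is obtained by the self-adjointness of $-\varepsilon^{2}\Delta$, which the paper writes as a double integration by parts and you write equivalently by cross-testing the weak formulations of (\ref{eq:psi-eps}) and (\ref{eq:derprima-eps}).
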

\begin{proof}
The regularity is standard. The first derivative is 
\[
T_{\varepsilon}'(u)[\varphi]=2\int u\varphi\psi_{\varepsilon}(u)+\int u^{2}\psi_{\varepsilon}'(u)[\varphi].
\]
and by (\ref{eq:derprima-eps}) and (\ref{eq:psi-eps}) we have 
\begin{align*}
2\int u\varphi\psi_{\varepsilon}(u) & =-\frac{1}{q}\left(\varepsilon^{2}\int\Delta(\psi_{\varepsilon}'(u)[\varphi])\psi_{\varepsilon}(u)\right)\\
 & =-\frac{1}{q}\left(\varepsilon^{2}\int\psi_{\varepsilon}'(u)[\varphi]\Delta\psi_{\varepsilon}(u)\right)\\
 & =\int\psi_{\varepsilon}'(u)[\varphi]u^{2},
\end{align*}
so the claim follows.
\end{proof}
Consider the following functional $I_{\varepsilon}\in C^{2}(H_{\varepsilon},\mathbb{R})$.
\begin{equation}
I_{\varepsilon}(u)=\frac{1}{2}\|u\|_{\varepsilon}^{2}+\frac{\omega}{4}G_{\varepsilon}(u)-\frac{1}{p}|u^{+}|_{\varepsilon,p}^{p}\label{eq:ieps}
\end{equation}
where 
\[
G_{\varepsilon}(u)=\frac{1}{\varepsilon^{3}}\int_{\Omega}u^{2}\psi_{\varepsilon}(u)dx=\frac{1}{\varepsilon^{3}}T_{\varepsilon}(u).
\]
By Lemma \ref{lem:Tder} we have 
\[
I_{\varepsilon}'(u)[\varphi]=\frac{1}{\varepsilon^{3}}\int_{\Omega}\varepsilon^{2}\nabla u\nabla\varphi+u\varphi+\omega u\psi_{\varepsilon}(u)\varphi-(u^{+})^{p-1}\varphi
\]
and
\[
I_{\varepsilon}'(u)[u]=\|u\|_{\varepsilon}^{2}+\omega G_{\varepsilon}(u)-|u^{+}|_{\varepsilon,p}^{p};
\]
then if $u$ is a critical points of the functional $I_{\varepsilon}$
the pair of positive functions $(u,\psi_{\varepsilon}(u))$ is a solution
of (\ref{eq:sms}).

We define the following Nehari set
\[
{\mathcal N}_{\varepsilon}=\left\{ u\in H_{0}^{1}(\Omega)\smallsetminus0\ :\ N_{\varepsilon}(u):=I'_{\varepsilon}(u)[u]=0\right\} 
\]
and the infimum level
\begin{equation}
m_{\varepsilon}=\inf_{{\mathcal N}_{\varepsilon}}I_{\varepsilon.}\label{eq:defmeps}
\end{equation}
The Nehari set has the following properties.
\begin{lem}
\label{lem:nehari}If $4<p<6$, ${\mathcal N}_{\varepsilon}$ is a $C^{2}$
manifold and $\inf_{{\mathcal N}_{\varepsilon}}\|u\|_{\varepsilon}>0$. 

Moreover, if $u\in{\mathcal N}_{\varepsilon}$, then 
\begin{align}
I_{\varepsilon}(u) & =\left(\frac{1}{2}-\frac{1}{p}\right)\|u\|_{\varepsilon}^{2}+\omega\left(\frac{1}{4}-\frac{1}{p}\right)G_{\varepsilon}(u)\nonumber \\
 & =\left(\frac{1}{2}-\frac{1}{p}\right)|u^{+}|_{p,\varepsilon}^{p}-\frac{\omega}{4}G_{\varepsilon}(u)\label{eq:I-nehari}\\
 & =\frac{1}{4}\|u\|_{\varepsilon}^{2}+\left(\frac{1}{4}-\frac{1}{p}\right)|u^{+}|_{p,\varepsilon}^{p}.\nonumber 
\end{align}
and it holds Palais-Smale condition for the functional $I_{\varepsilon}$
on ${\mathcal N}_{\varepsilon}$. 

Finally, for all $w\in H_{0}^{1}(\Omega)$ such that $|w^{+}|_{\varepsilon,p}=1$
there exists a unique positive number $t_{\varepsilon}=t_{\varepsilon}(w)$
such that $t_{\varepsilon}(w)w\in{\mathcal N}_{\varepsilon}$. The number
$t_{\varepsilon}$ is the unique critical point of the function 
\[
H(t)=I_{\varepsilon}(tw)=\frac{1}{2}t^{2}\|w\|_{\varepsilon}^{2}+\frac{t^{4}}{4}\omega G_{\varepsilon}(w)-\frac{t^{p}}{p}.
\]
\end{lem}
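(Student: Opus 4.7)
My plan is to verify the claims in the order they appear in the statement, using throughout the scaling $\psi_{\varepsilon}(tu)=t^{2}\psi_{\varepsilon}(u)$ (immediate from the linearity of~(\ref{eq:psi-eps}) in $v$), which in particular yields $G_{\varepsilon}(tu)=t^{4}G_{\varepsilon}(u)$.

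To show that $\mathcal N_{\varepsilon}$ is a $C^{2}$ manifold I would check that $N_{\varepsilon}'(u)[u]\neq0$ for every $u\in\mathcal N_{\varepsilon}$, which by the implicit function theorem makes $N_{\varepsilon}$ a submersion at each of its nonzero zeros. A direct computation using Lemma~\ref{lem:Tder} gives
\[
N_{\varepsilon}'(u)[u]=2\|u\|_{\varepsilon}^{2}+4\omega G_{\varepsilon}(u)-p|u^{+}|_{\varepsilon,p}^{p},
\]
and substituting the Nehari identity $|u^{+}|_{\varepsilon,p}^{p}=\|u\|_{\varepsilon}^{2}+\omega G_{\varepsilon}(u)$ collapses this to $(2-p)\|u\|_{\varepsilon}^{2}+(4-p)\omega G_{\varepsilon}(u)$, strictly negative because $p>4$ and $G_{\varepsilon}(u)\geq0$ by Lemma~\ref{lem:psi}. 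The lower bound on $\|u\|_{\varepsilon}$ then follows from the Nehari identity combined with the Sobolev inequality: the change of variables $x=\varepsilon y$ turns $\|\cdot\|_{\varepsilon}$ and $|\cdot|_{\varepsilon,p}$ into the standard $H^{1}$ and $L^{p}$ norms on $\Omega/\varepsilon$, so the classical embedding gives $|u^{+}|_{\varepsilon,p}^{p}\leq C\|u\|_{\varepsilon}^{p}$ with $C$ independent of $\varepsilon$, and therefore $\|u\|_{\varepsilon}^{2}\leq C\|u\|_{\varepsilon}^{p}$ forces $\|u\|_{\varepsilon}\geq c_{0}>0$.

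The three expressions for $I_{\varepsilon}(u)$ on $\mathcal N_{\varepsilon}$ are purely algebraic: eliminating $|u^{+}|_{\varepsilon,p}^{p}$ in~(\ref{eq:ieps}) via the Nehari identity produces the first form, eliminating $\|u\|_{\varepsilon}^{2}$ produces the second, and the third is an average of the two. For the fibration statement, $|w^{+}|_{\varepsilon,p}=1$ together with the scaling of $\psi_{\varepsilon}$ gives immediately the announced form of $H(t)$, hence
\[
H'(t)=t\bigl[\|w\|_{\varepsilon}^{2}+t^{2}\omega G_{\varepsilon}(w)-t^{p-2}\bigr].
\]
Dividing the bracket by $t^{p-2}$ yields $\|w\|_{\varepsilon}^{2}t^{-(p-2)}+\omega G_{\varepsilon}(w)t^{-(p-4)}-1$, strictly decreasing on $(0,\infty)$ (both exponents are negative since $p>4$) from $+\infty$ to $-1$, so it has exactly one positive zero $t_{\varepsilon}(w)$, which is simultaneously the unique positive critical point of $H$ and the unique multiplier making $tw\in\mathcal N_{\varepsilon}$.

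For the Palais--Smale condition I would start from a sequence $(u_{n})\subset\mathcal N_{\varepsilon}$ with $I_{\varepsilon}(u_{n})\to c$ and $(I_{\varepsilon}|_{\mathcal N_{\varepsilon}})'(u_{n})\to0$. The third expression above immediately bounds $\|u_{n}\|_{\varepsilon}$ (both summands are nonnegative since $p>4$), so up to a subsequence $u_{n}\rightharpoonup u$ in $H_{\varepsilon}$ and $u_{n}\to u$ in $L^{p}(\Omega)$ by Rellich. Writing the Lagrange multiplier identity $I_{\varepsilon}'(u_{n})=\lambda_{n}N_{\varepsilon}'(u_{n})+o(1)$, testing with $u_{n}$, and using that $|N_{\varepsilon}'(u_{n})[u_{n}]|$ stays bounded away from zero on $\mathcal N_{\varepsilon}$ (by the formula above combined with the bounds on $\|u_{n}\|_{\varepsilon}$), one obtains $\lambda_{n}\to0$ and hence $I_{\varepsilon}'(u_{n})\to0$ in $H_{\varepsilon}^{*}$. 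The only delicate step I anticipate is the upgrade to strong convergence in $H_{\varepsilon}$: testing $I_{\varepsilon}'(u_{n})-I_{\varepsilon}'(u)$ against $u_{n}-u$ and using strong $L^{p}$ convergence for the subcritical term, together with compactness of $u\mapsto\psi_{\varepsilon}(u)$ on bounded sets via elliptic regularity applied to~(\ref{eq:psi-eps}) for the Maxwell term, gives $\|u_{n}-u\|_{\varepsilon}\to0$.
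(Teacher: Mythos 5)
The paper states Lemma \ref{lem:nehari} without any proof (it is treated as standard), so there is no argument of the authors to compare against; your proof is the standard one and is correct in all its parts --- the computation $N_{\varepsilon}'(u)[u]=(2-p)\|u\|_{\varepsilon}^{2}+(4-p)\omega G_{\varepsilon}(u)<0$ giving both the manifold structure and (via the $\varepsilon$-uniform Sobolev constant obtained by rescaling) the lower bound on $\|u\|_{\varepsilon}$, the strict monotonicity of $t^{-(p-2)}\|w\|_{\varepsilon}^{2}+t^{-(p-4)}\omega G_{\varepsilon}(w)-1$ for the fibration, and the Palais--Smale scheme with vanishing Lagrange multipliers followed by the strong-convergence upgrade. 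The only cosmetic slip is that the third identity in (\ref{eq:I-nehari}) is not the equal-weight average of the first two but the convex combination with weights $\tfrac{p}{2(p-2)}$ and $\tfrac{p-4}{2(p-2)}$ (equivalently, it is obtained by eliminating $G_{\varepsilon}(u)$ from (\ref{eq:ieps}) via the Nehari identity), which affects nothing.
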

To obtain the proof of our main theorem, we will perform a blow down
procedure around a point of the domain $\Omega$. 

To perform this procedure, we introduce the \emph{Fermi coordinates}
around a point $\xi\in\partial\Omega$. For $x\in\Omega$ close to
$\xi$ we have $(y(x),t(x))\in\mathbb{R}^{2}\times\mathbb{R}^{+}$
where $t(x)=d(x,\partial\Omega)$ and $y(x)$ are the normal coordinates
of $\partial\Omega$ at $\xi$.

Given $B_{r}^{+}=B(0,r)\times[0,r]$, for a suitable small $r$, the
Fermi coordinates are a diffeomorphism $F_{q}:B_{r}^{+}\rightarrow F_{q}(B_{r}^{+})$.
We call $D(q,r):=F_{q}(B_{r}^{+})$.

In Fermi coordinates we have the following expansion of the scalar
product $g_{ij}(z)$ and of the metric form $|g|^{\frac{1}{2}}$:
\begin{eqnarray}
g^{ij}(z) & = & \delta_{ij}+2h_{ij}t+O(|z|^{2})\text{ for }i,j=1,2\label{eq:g1}\\
g^{i3}(z) & = & \delta_{i3}\label{eq:g2}\\
|g|^{\frac{1}{2}}(z) & = & 1-(n-1)Ht+O(|z|^{2})\label{eq:g3}
\end{eqnarray}
where $h_{ij}$ and $H$ are respectively the second fundamental form
tensor and the mean curvature of $\partial\Omega$ at the point $\xi$.

\subsection{\label{subsec:The-limit-problem}The limit problem }

Consider the following problem in the whole space.
\begin{equation}
\left\{ \begin{array}{cc}
-\Delta u+u+\omega uv=|u|^{p-2}u & \text{ in }\mathbb{R}^{3}\\
-\Delta v=qu^{2} & \text{ in }\mathbb{R}^{3}\\
u>0\text{ in }\mathbb{R}^{3}
\end{array}\right.\label{eq:PL}
\end{equation}
We will refer at problem (\ref{eq:PL}) as the limit problem, in fact
it plays a fundamental role in the blow down procedure hereafter.
We define the function $\psi_{\infty}(u)$ as a solution of the second
equation, and we can reduce the system to a single nonlinear equation.
As pointed out in the introduction, in this special case we know the
explicit expression for $\psi_{\infty}(u)$ which is
\[
\psi_{\infty}(u)=\frac{1}{4\pi}\int_{\mathbb{R}^{3}}\frac{u^{2}(y)}{|x-y|}dy.
\]
As before, we can define a functional
\[
I_{\infty}(u)=\frac{1}{2}\|u\|_{H^{1}}^{2}+\frac{\omega}{4}G(u)-\frac{1}{p}|u^{+}|_{p}^{p}
\]
where $G(u)=\int_{\mathbb{R}^{3}}u^{2}\psi_{\infty}(u)dx$ and the
Nehari manifold 
\[
\mathcal{N}_{\infty}=\left\{ u\in H^{1}(\mathbb{R}^{3})\smallsetminus0\ :\ I_{\infty}'(u)[u]=0\right\} .
\]
 It is possible to prove (see \cite{He}) that the value 
\[
m_{\infty}=\inf_{\mathcal{N}_{\infty}}I_{\infty}
\]
is attained by a positive function $U$ which is a solution of problem
(\ref{eq:PL}), even though the uniquess of the ground state is nowaday
not known. We fix here a positive ground state $U(x)$, and we define
the rescaled function $U_{\varepsilon}(x)=U\left(\frac{x}{\varepsilon}\right)$.

In the following with $U$ we always refer to this particular ground
state we have chosen here. All the proofs work independently of the
choice we made here. 

The function $U_{\varepsilon}$ will be used in section \ref{sec:phieps}
to construct a continuous operator which associate a point in the
domain to a single peaked function in $H_{0}^{1}(\Omega)$. 

While blowing down around an interior point of $\Omega$ leads us
to the limit problem (\ref{eq:PL}), the blow down procedure around
a point of the boundary $\partial\Omega$ gives the following limit
problem on the half space
\begin{equation}
\left\{ \begin{array}{cc}
-\Delta u+u+\omega uv=|u|^{p-2}u & \text{ in }\mathbb{R}_{+}^{3}\\
-\Delta v=qu^{2} & \text{ in }\mathbb{R}_{+}^{3}\\
u,v=0 & \text{ on }\mathbb{\partial R}_{+}^{3}
\end{array}\right.\label{eq:PLhalf}
\end{equation}
We can prove, adapting a result by Esteban and Lions \cite{EL}, that
the only solution of problem (\ref{eq:PLhalf}) is the pair $u\equiv0$
and $v\equiv0$, and this result will be a key argument while proving
concentration results in Section \ref{sec:Conc}. 

The main difference with the theorem of \cite{EL} is that (\ref{eq:PLhalf})
is not a variational system, so the result of Esteban and Lions could
not applied directly; however, reducing (\ref{eq:PLhalf}) to a single
nonlinear equation allows us to prove an analogous result.
\begin{lem}
The system (\ref{eq:PLhalf}) admits only the trivial solution $u\equiv0$,
$v\equiv0$.
\end{lem}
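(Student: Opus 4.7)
The plan is to reduce the system to a single nonlocal but variational equation. Setting $v=\psi(u)$, where $\psi(u)\geq 0$ is the solution of $-\Delta v=qu^{2}$ in $\mathbb{R}^{3}_{+}$ with $v|_{\partial}=0$ (explicitly given by the Dirichlet Green's function of $-\Delta$ on the half-space), the system is equivalent to
\[
-\Delta u + u + \omega u\,\psi(u) = |u|^{p-2}u \text{ in }\mathbb{R}^{3}_{+},\qquad u|_{\partial\mathbb{R}^{3}_{+}}=0,
\]
which is now the Euler--Lagrange equation of a single functional and can be attacked by the Esteban--Lions technique.

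Following Esteban--Lions, I would multiply this reduced equation by the normal-derivative multiplier $\partial_{x_{3}}u$ and integrate over $\mathbb{R}^{3}_{+}$. Using $u|_{\partial}=0$, $\int(-\Delta u)\,\partial_{x_{3}}u\,dx$ reduces to $\frac{1}{2}\int_{\partial\mathbb{R}^{3}_{+}}(\partial_{x_{3}}u)^{2}\,dS$; the purely local terms $\int u\,\partial_{x_{3}}u$ and $\int|u|^{p-2}u\,\partial_{x_{3}}u$ vanish as perfect $x_{3}$-derivatives of quantities zero on $\partial\mathbb{R}^{3}_{+}$ and decaying at infinity; and the nonlocal coupling term, after one integration by parts using $v|_{\partial}=0$, becomes $-\frac{\omega}{2}\int u^{2}\,\partial_{x_{3}}v\,dx$. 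The outcome is the Esteban--Lions-type identity
\[
\int_{\partial\mathbb{R}^{3}_{+}}(\partial_{x_{3}}u)^{2}\,dS \;=\; \omega\int_{\mathbb{R}^{3}_{+}}u^{2}\,\partial_{x_{3}}v\,dx.
\]
Coupling this with the parallel identity $\int_{\partial}(\partial_{x_{3}}v)^{2}\,dS=2q\int u^{2}\,\partial_{x_{3}}v\,dx$ (obtained by testing $-\Delta v=qu^{2}$ against $\partial_{x_{3}}v$) ties both boundary Dirichlet data to the single volume integral $\int u^{2}\,\partial_{x_{3}}v$. A sign analysis of this right-hand side via the explicit half-space Green's function representation $v(x)=q\int G_{+}(x,y)u^{2}(y)\,dy$ (which, after symmetrisation in $(x,y)$, makes the relevant signs transparent) combined with Hopf's lemma applied to both $u$ and $v$ (which for any nontrivial positive solution forces $\partial_{x_{3}}u,\partial_{x_{3}}v>0$ strictly on $\partial\mathbb{R}^{3}_{+}$) then yields a contradiction, forcing $u\equiv 0$ and hence $v\equiv 0$.

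The principal obstacle is precisely the nonlocality of the coupling term $\omega u\,\psi(u)$: the classical Esteban--Lions identity depends on $\int f(u)\,\partial_{x_{3}}u$ vanishing automatically for local semilinearities, which fails here because $\psi(u)$ depends on the values of $u$ over all of $\mathbb{R}^{3}_{+}$. The key adaptation consists in rewriting the residual $\int u^{2}\,\partial_{x_{3}}v$ using the second equation of the system together with the positivity properties of the Dirichlet Green's function, so that the half-space non-existence conclusion of Esteban--Lions can still be recovered in the non-variational setting of \eqref{eq:PLhalf}.
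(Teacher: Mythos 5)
Your reduction to the single nonlocal equation and your two multiplier identities are correct (and in fact more carefully bookkept than the paper's sketch): testing the reduced equation with $\partial_{x_{3}}u$ gives $\int_{\partial\mathbb{R}^{3}_{+}}(\partial_{x_{3}}u)^{2}\,dS=\omega\int_{\mathbb{R}^{3}_{+}}u^{2}\partial_{x_{3}}v\,dx$, and testing $-\Delta v=qu^{2}$ with $\partial_{x_{3}}v$ gives $\int_{\partial\mathbb{R}^{3}_{+}}(\partial_{x_{3}}v)^{2}\,dS=2q\int_{\mathbb{R}^{3}_{+}}u^{2}\partial_{x_{3}}v\,dx$. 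The gap is the last step: these identities do \emph{not} produce a contradiction when $\omega>0$. Combining them yields $\int_{\partial}(\partial_{x_{3}}u)^{2}=\frac{\omega}{2q}\int_{\partial}(\partial_{x_{3}}v)^{2}$, and the symmetrisation of the Green's kernel that you invoke shows precisely that $\int u^{2}\partial_{x_{3}}v=\frac{q}{2}\iint u^{2}(x)u^{2}(y)\,(\partial_{x_{3}}+\partial_{y_{3}})G_{+}(x,y)\,dx\,dy\ge0$, since $(\partial_{x_{3}}+\partial_{y_{3}})G_{+}=\frac{x_{3}+y_{3}}{2\pi|x-\bar y|^{3}}\ge0$. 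So for $\omega>0$ every quantity in sight is nonnegative, and Hopf's lemma (which moreover you cannot apply to $u$ itself, since the half-space problem carries no sign assumption on $u$ --- and in its application in Lemma \ref{lem:crucial} the limit function has no a priori sign) only makes them strictly positive; that is perfectly self-consistent, not contradictory. Your sign analysis would close the argument only for $\omega<0$, i.e.\ the Schroedinger--Newton case of Remark 3, which is not the case treated here.

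The paper's mechanism is different in two respects. First, it argues that the nonlocal contribution enters the Esteban--Lions identity only through exact $x_{n}$-derivatives, so that the identity collapses to $\int_{\partial\mathbb{R}^{3}_{+}}|\nabla u|^{2}=0$, i.e.\ $\nabla u\equiv0$ on the boundary --- no contradiction is sought at this stage. Second, and this is the step entirely missing from your proposal, the conclusion is then obtained by a regularity argument plus extension: since $u=0$ and $\nabla u=0$ on $\partial\mathbb{R}^{3}_{+}$, one extends $u$ by zero across a small ball centred on the boundary, obtains a $C^{2}$ solution of a linear Schroedinger-type equation vanishing on an open set, and invokes the unique continuation principle to force $u\equiv0$ (hence $v\equiv0$). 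Even if your identities did give $\partial_{x_{3}}u=0$ on the boundary, you would still need this unique continuation step (or some substitute) to conclude; as written, your argument stops at a ``contradiction'' that does not exist.
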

\begin{proof}[Sketch of the proof]

\textbf{Step 1:} We have that $\nabla u\equiv0$ almost everywhere
on $\mathbb{\partial R}_{+}^{3}$. 

Let us define $\psi_{\infty,+}(w)$ the solution of 
\begin{equation}
\left\{ \begin{array}{cc}
-\Delta v=qw^{2} & \text{ in }\mathbb{R}_{+}^{3}\\
v=0 & \text{ on }\partial\mathbb{R}_{+}^{3}
\end{array}\right.\label{eq:psi+}
\end{equation}
and let $u$ the solution of the reduced problem 
\begin{equation}
\left\{ \begin{array}{cc}
-\Delta u+u+\omega u\psi_{\infty,+}(u)=|u|^{p-2}u & \text{ in }\mathbb{R}_{+}^{3}\\
u=0 & \text{ on }\mathbb{\partial R}_{+}^{3}
\end{array}\right..\label{eq:PLhalf-red}
\end{equation}
As in \cite{EL}, we multiply the first equation of (\ref{eq:PLhalf-red})
by ${\displaystyle \frac{\partial u}{\partial x_{n}}}$, obtaining
\begin{align*}
0 & =\int_{\mathbb{R}_{+}^{3}}-\Delta u{\displaystyle \frac{\partial u}{\partial x_{n}}}+u{\displaystyle \frac{\partial u}{\partial x_{n}}}+\omega u{\displaystyle \frac{\partial u}{\partial x_{n}}}\psi_{\infty,+}(u)-|u|^{p-2}u{\displaystyle \frac{\partial u}{\partial x_{n}}dx}\\
 & =\int_{\mathbb{R}_{+}^{3}}\frac{\partial}{\partial x_{n}}\left[\frac{|\nabla u|^{2}+u^{2}}{2}-\frac{|u|^{p}}{p}\right]dx+\omega\int_{\mathbb{R}_{+}^{3}}{\displaystyle \frac{\partial u}{\partial x_{n}}}\psi_{\infty,+}(u)dx.
\end{align*}
Now, in analogy with Lemma \ref{lem:Tder} we have that 
\[
\int_{\mathbb{R}_{+}^{3}}{\displaystyle \frac{\partial u}{\partial x_{n}}}\psi_{\infty,+}(u)dx=\frac{1}{4}\int_{\mathbb{R}_{+}^{3}}{\displaystyle \frac{\partial}{\partial x_{n}}}\left[u^{2}\psi_{\infty,+}(u)\right]dx,
\]
so 
\[
0=\int_{\mathbb{R}_{+}^{3}}\frac{\partial}{\partial x_{n}}\left[\frac{|\nabla u|^{2}+u^{2}}{2}-\frac{|u|^{p}}{p}+\frac{\omega}{4}u^{2}\psi_{\infty,+}(u)\right]dx+\omega\int_{\mathbb{R}_{+}^{3}}{\displaystyle \frac{\partial u}{\partial x_{n}}}\psi_{\infty,+}(u)dx.
\]
 and, by integration by parts and recalling that $u=0$ on $\partial\mathbb{R}_{+}^{3}$,
we get 
\[
0=\int_{\partial\mathbb{R}_{+}^{3}}|\nabla u|^{2}dx_{1}\dots dx_{n-1}
\]
 which proves the claim.

\textbf{Step 2:} We have that $u\in L^{p}(\mathbb{R}_{+}^{3})$ and
$\psi_{\infty,+}(u)\in L^{q}(\mathbb{R}_{+}^{3})$ for every $p\ge2$
and $q\ge6$. Moreover both $u$ and $\psi_{\infty,+}(u)$ are $C^{2}$
functions on $\mathbb{R}_{+}^{3}$.

The proof of this claim is standard.

\textbf{Step 3:} Conclusion

Take a small ball around the origin $B=B(0,r)\subset\mathbb{R}^{3}$
and define on $\mathbb{R}_{+}^{3}\cup B$ the function
\[
\psi_{0}=\left\{ \begin{array}{cc}
\psi_{\infty,+}(u) & \text{ on }\mathbb{R}_{+}^{3}\\
-\psi_{\infty,+}(u) & \text{ on }B\smallsetminus\mathbb{R}_{+}^{3}
\end{array}\right.
\]
so that $\psi_{0}\in C^{2}(\mathbb{R}_{+}^{3}\cup B)$. Now we extend
$u$ to $\mathbb{R}_{+}^{3}\cup B$ as follows:
\[
u_{0}=\left\{ \begin{array}{cc}
u & \text{ on }\mathbb{R}_{+}^{3}\\
0 & \text{ on }B\smallsetminus\mathbb{R}_{+}^{3}
\end{array}\right..
\]
 We have that $u_{0}$ is a $C^{2}$ solution to the equation $-\Delta u+u+\omega u\psi_{0}=|u|^{p-2}u$
in $\mathbb{R}_{+}^{3}\cap B$ which vanishes identically on $B\smallsetminus\mathbb{R}_{+}^{3}$. 

By the unique continuation principle, we can argue that $u_{0}\equiv0$
identically, thus also $u\equiv0$ and, trivially $\psi_{\infty,+}(u)=\psi_{\infty,+}(0)\equiv0$.
\end{proof}

\section{\label{sec:proof}Main ingredient of the proof}

We sketch the proof of Theorem \ref{thm:1}. First of all, it is easy
to see that, if $4<p<6$, the functional $I_{\varepsilon}\in C^{2}$
is bounded below and satisfies the Palais Smale condition on the complete
$C^{2}$ manifold ${\mathcal N}_{\varepsilon}$. We recall a well known
result in nonlinear analysis
\begin{thm}
\label{thm:standard}Let $I\in C^{1,1}(\mathcal{N})$, $\mathcal{N}$
being a $C^{1,1}$ complete Hilbert manifold. If $I$ is bounded from
below on $\mathcal{N}$ and $I$ safisfies the Palais Smale condition,
then $I_{\varepsilon}$ has at least $\cat I^{d}$ critical points
in the sublevel
\[
I^{d}=\left\{ u\in\mathcal{N}\ :\ I(u)\le d\right\} .
\]
 Moreover if $\mathcal{N}$ is contractible and $\cat I^{d}>1$, then
there is at least another critical point $u\notin I^{d}$.
\end{thm}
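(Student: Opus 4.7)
The plan is to apply the classical Lusternik--Schnirelmann minimax scheme on the complete $C^{1,1}$ Hilbert manifold $\mathcal{N}$. The only analytic input is the deformation lemma: since $I$ is bounded from below and satisfies Palais--Smale, if $c$ is a regular value of $I$, then for some $\delta>0$ the sublevel $I^{c+\delta}$ can be deformed into $I^{c-\delta}$ via a pseudo-gradient flow (possibly after isolating a neighbourhood of the critical set). Everything else is topology.

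For the first assertion I would set, for $k=1,\dots,N:=\cat(I^{d})$,
\[
c_{k}=\inf\bigl\{c\le d\ :\ \cat_{I^{d}}(I^{c})\geq k\bigr\}.
\]
Each $c_{k}$ is finite because $I$ is bounded below, and $c_{1}\leq c_{2}\leq\cdots\leq c_{N}\leq d$. If some $c_{k}$ were a regular value, applying the deformation lemma at level $c_{k}$ would push $I^{c_{k}+\delta}$ inside $I^{c_{k}-\delta}$, and subadditivity of category would then force $\cat_{I^{d}}(I^{c_{k}-\delta})\geq k$, contradicting the definition of the infimum. Hence each $c_{k}$ is a critical value. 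When $c_{k}=c_{k+1}=\cdots=c_{k+p}=c$, a standard neighbourhood--deformation argument shows that the critical set $K_{c}\cap I^{d}$ has category at least $p+1$ in $I^{d}$, giving at least $p+1$ distinct critical points at the common level. Summing over the distinct blocks produces at least $N=\cat(I^{d})$ distinct critical points in $I^{d}$.

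For the second assertion I would argue by contradiction. Assume that $\mathcal{N}$ is contractible, that $\cat(I^{d})>1$, and that $I$ has \emph{no} critical point in $\mathcal{N}\setminus I^{d}$. Using a locally Lipschitz pseudo-gradient vector field for $I$ on the critical-point-free region $\{u\in\mathcal{N}:I(u)>d\}$ and patching the resulting local deformations produced by the deformation lemma on successive intervals $[d+n,d+n+1]$, one exhibits $I^{d}$ as a (strong) deformation retract of $\mathcal{N}$. Consequently $\cat(I^{d})\leq\cat(\mathcal{N})=1$, contradicting the assumption $\cat(I^{d})>1$, and producing the desired extra critical point outside $I^{d}$.

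The main obstacle I expect is precisely this last step: extending the local deformation lemma to a global retraction of $\mathcal{N}$ onto $I^{d}$ when $I$ is not coercive and the manifold $\mathcal{N}$ is unbounded in energy. This is handled by invoking Palais--Smale in an essential way — the absence of critical values in $(d,\infty)$ keeps the pseudo-gradient flow lines from stalling, and completeness of $\mathcal{N}$ together with standard cut-off/rescaling arguments lets one concatenate the flow across arbitrarily high levels so that every trajectory eventually enters $I^{d}$ and the retraction is well defined.
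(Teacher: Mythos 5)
The paper does not prove this statement at all---it is recalled as a classical result of Lusternik--Schnirelmann theory---and your outline reproduces the standard argument (category minimax levels $c_k=\inf\{c:\cat_{I^{d}}(I^{c})\ge k\}$ for the first claim, and a global deformation retraction of $\mathcal{N}$ onto $I^{d}$ combined with the homotopy invariance of category for the second) essentially correctly. The one point you gloss over is that critical points at the level $d$ itself are not excluded by the hypothesis ``no critical points outside $I^{d}$,'' so the pseudo-gradient flow from above may stall as it approaches level $d$; this is exactly what the classical second deformation lemma is for, so it is a technical rather than a conceptual issue, and the rest of your sketch (reparametrizing the flow so each trajectory reaches $I^{d}$ in finite time, using Palais--Smale to bound $\|I'\|$ away from zero on bounded energy strips and completeness to prevent escape) is the standard way to make the retraction rigorous.
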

We prove that, for $\varepsilon$ and $\delta$ small enough, it holds
$\cat\Omega\le\cat\left({\mathcal N}_{\varepsilon}\cap I_{\varepsilon}^{m_{\infty}+\delta}\right)$,
where $m_{\infty}$ has been defined in the previous section. 

To get the inequality $\cat\Omega\le\cat\left({\mathcal N}_{\varepsilon}\cap I_{\varepsilon}^{m_{\infty}+\delta}\right)$
we build two continuous operators
\begin{eqnarray*}
\Phi_{\varepsilon}:\Omega^{-}\rightarrow{\mathcal N}_{\varepsilon}\cap I_{\varepsilon}^{m_{\infty}+\delta} & \text{ and } & \beta:{\mathcal N}_{\varepsilon}\cap I_{\varepsilon}^{m_{\infty}+\delta}\rightarrow\Omega^{+}.
\end{eqnarray*}
where 
\begin{eqnarray*}
\Omega^{-}=\left\{ x\in\Omega\ :\ d(x,\partial\Omega)<r\right\} , &  & \Omega^{+}=\left\{ x\in\mathbb{R}^{3}\ :\ d(x,\partial\Omega)<r\right\} 
\end{eqnarray*}
with $r$ small enough so that $\cat(\Omega^{-})=\cat(\Omega^{+})=\cat(\Omega)$
and such that Definition \ref{def:Fermi} applies.

Following an idea in \cite{BC1}, we build these operators $\Phi_{\varepsilon}$
and $\beta$ such that $\beta\circ\Phi_{\varepsilon}:\Omega^{-}\rightarrow\Omega^{+}$
is homotopic to the immersion $i:\Omega^{-}\rightarrow\Omega^{+}$. 

The operator $\Phi_{\varepsilon}$ is constructed in Section \ref{sec:phieps}
and the definition and the main properties of barycenter map $\beta$
are stated in Section \ref{sec:barycenter}. To define the barycenter
map, however, we have to prove that a low energy function is concetrated
around a point, and that the concentration point can not be to close
to the boundary. These key results are proved in Section \ref{sec:Conc}.

We recall the following well known topological result.
\begin{rem}
\label{rem:cat}Let $X_{1}$ and $X_{2}$, $X_{3}$ be topological
spaces with $X_{1}$ and $X_{3}$ which are homotopically identical.
If $g_{1}:X_{1}\rightarrow X_{2}$ and $g_{2}:X_{2}\rightarrow X_{3}$
are continuous operators such that $g_{2}\circ g_{1}$ is homotopic
to the identity on $X_{1}$, then $\cat X_{1}\leq\cat X_{2}$ . 
\end{rem}
At this point, in light of Remark \ref{rem:cat} we have 
\[
\cat\Omega\le\cat\left({\mathcal N}_{\varepsilon}\cap I_{\varepsilon}^{m_{\infty}+\delta}\right)
\]
and by Theorem \ref{thm:standard} we can conclude that there are
at least $\cat\Omega$ critical points in ${\mathcal N}_{\varepsilon}\cap I_{\varepsilon}^{m_{\infty}+\delta}$.
To conclude the proof of Theorem \ref{thm:1}, in Section \ref{sec:Teps}
we construct a compact contractible set $T_{\varepsilon}$ such that
\[
\Phi_{\varepsilon}(\Omega^{-})\subset T_{\varepsilon}\subset{\mathcal N}_{\varepsilon}\cap I_{\varepsilon}^{C}
\]
 where $C$ is a universal constant (see Lemma \ref{lem:C}). Since
$\Omega$ is non contractible we have $1<\cat\Omega^{-}\le\cat\Phi_{\varepsilon}(\Omega^{-})$
and by Theorem \ref{thm:standard} we conclude the proof. 

\section{\label{sec:Conc}Concentration results}

For any $\varepsilon>0$ we can construct a finite closed partition
$\mathcal{P}^{\varepsilon}=\left\{ P_{j}^{\varepsilon}\right\} _{j\in\Lambda_{\varepsilon}}$
of $\bar{\Omega}$ such that
\begin{itemize}
\item $P_{j}^{\varepsilon}$ is closed for every $j$ and $P_{j}^{\varepsilon}\cap P_{k}^{\varepsilon}\subset\partial P_{j}^{\varepsilon}\cap\partial P_{k}^{\varepsilon}$
for $j\neq k$;
\item there exist $r_{1}(\varepsilon),r_{2}(\varepsilon)>0$, with $C_{1}\varepsilon\le r_{1}(\varepsilon)<r_{2}(\varepsilon)\le C_{2}\varepsilon$
for some positive constants $C_{1},C_{2}$, and a positive number
$K$, such that, if $P_{j}^{\varepsilon}\cap\partial M=\emptyset$,
then there are points $q_{j}^{\varepsilon}\in P_{j}^{\varepsilon}$
for which  $B(q_{j}^{\varepsilon},r_{1}(\varepsilon))\subset P_{j}^{\varepsilon}\subset B(q_{j}^{\varepsilon},r_{2}(\varepsilon))\subset B(q_{j}^{\varepsilon},Kr_{2}(\varepsilon))$,
while, if $P_{j}^{\varepsilon}\cap\partial M\neq\emptyset$, then
there are points $q_{j}^{\varepsilon}\in P_{j}^{\varepsilon}\cap\partial\Omega$
for which $D(q_{j}^{\varepsilon},r_{1}(\varepsilon))\subset P_{j}^{\varepsilon}\subset D(q_{j}^{\varepsilon},r_{2}(\varepsilon))\subset D(q_{j}^{\varepsilon},Kr_{2}(\varepsilon))$.
To simplify the notations we set $A_{j}^{\varepsilon}(r)=\left\{ \begin{array}{ccc}
B(q_{j}^{\varepsilon},r) &  & P_{j}^{\varepsilon}\cap\partial M=\emptyset\\
D(q_{j}^{\varepsilon},r) &  & P_{j}^{\varepsilon}\cap\partial M\neq\emptyset
\end{array}\right.$
\item lastly, there exists a finite number $\nu(\Omega)\in\mathbb{N}$ such
that every $q\in\bar{\Omega}$ is contained in at most $\nu(\Omega)$
sets $A_{j}^{\varepsilon}(Kr_{2}(\varepsilon))$, where $\nu(\Omega)$
does not depends on $\varepsilon$.
\end{itemize}
By compactness of $\bar{\Omega}$ such a partition exists, at least
for small $\varepsilon$. In the following we will choose always $\varepsilon_{0}(\delta)$
sufficiently small in order to have this partition. We remark that
such a partiton can be obtained in this case simply by splitting the
whole space in cubes $\left\{ Q_{j}^{\varepsilon}\right\} _{j\in\Lambda_{\varepsilon}}$
with sides of lenght $\varepsilon$ and taking $\mathcal{P}^{\varepsilon}=\left\{ Q_{j}^{\varepsilon}\cap\bar{\Omega}\right\} _{j\in\Lambda_{\varepsilon}}$.
We prefer to state the general properties of the partiton $\mathcal{P}^{\varepsilon}$
since this could be a non trivial generalization when dealing with
Riemannian manifolds with boundary. 
\begin{lem}
\label{lem:gamma} We recall that there exists a constant $\gamma>0$
such that, for any $\delta>0$ and for any $\varepsilon<\varepsilon_{0}(\delta)$
as in Proposition \ref{prop:phieps}, given any ``good'' partition
${\mathcal P}_{\varepsilon}=\left\{ P_{j}^{\varepsilon}\right\} _{j}$
of the domain $\bar{\Omega}$ and for any function $u\in{\mathcal N}_{\varepsilon}\cap I_{\varepsilon}^{m_{\varepsilon}+\delta}$
there exists, for an index $\bar{j}$ a set $P_{\bar{j}}^{\varepsilon}$
such that 
\begin{equation}
\frac{1}{\varepsilon^{3}}\int_{P_{\bar{j}}^{\varepsilon}}|u^{+}|^{p}dx\ge\gamma.\label{eq:gamma}
\end{equation}
\end{lem}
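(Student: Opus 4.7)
The plan is to argue by contradiction. If no universal $\gamma>0$ works, I can extract sequences $\varepsilon_{n}\downarrow 0$, $\delta_{n}\downarrow 0$, good partitions $\mathcal{P}^{\varepsilon_{n}}$ and $u_{n}\in{\mathcal N}_{\varepsilon_{n}}\cap I_{\varepsilon_{n}}^{m_{\varepsilon_{n}}+\delta_{n}}$ along which
\begin{equation*}
\sigma_{n}:=\sup_{j}\frac{1}{\varepsilon_{n}^{3}}\int_{P_{j}^{\varepsilon_{n}}}|u_{n}^{+}|^{p}\,dx\longrightarrow 0,
\end{equation*}
and then contradict the Nehari identities of Lemma \ref{lem:nehari}.

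The crux is a \emph{local} Sobolev estimate on each enlarged cell $A_{j}^{\varepsilon}(Kr_{2}(\varepsilon))$. After the change of variables $y=(x-q_{j}^{\varepsilon})/\varepsilon$ (Euclidean for interior cells, Fermi for boundary ones), this set is contained in a fixed ball or half--ball of radius $KC_{2}$. On such a fixed domain the Sobolev embedding $H^{1}\hookrightarrow L^{6}$ holds with a uniform constant, and the trivial identity $\|v\|_{L^{p}}^{p}=\|v\|_{L^{p}}^{p-2}\,\|v\|_{L^{p}}^{2}$ combined with Sobolev gives, once unrescaled,
\begin{equation*}
\frac{1}{\varepsilon^{3}}\int_{A_{j}^{\varepsilon}(Kr_{2})}|u^{+}|^{p}\,dx \le C\left(\frac{1}{\varepsilon^{3}}\int_{A_{j}^{\varepsilon}(Kr_{2})}|u^{+}|^{p}\,dx\right)^{(p-2)/p}\frac{1}{\varepsilon^{3}}\int_{A_{j}^{\varepsilon}(Kr_{2})}(\varepsilon^{2}|\nabla u|^{2}+u^{2})\,dx,
\end{equation*}
with $C$ independent of $\varepsilon$ and $j$. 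For boundary cells, the expansions \eqref{eq:g1}--\eqref{eq:g3} show that the pulled--back metric on the rescaled half--ball is an $O(\varepsilon)$ perturbation of the Euclidean one, so the constant stays uniform.

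Next I would globalise. Each $A_{j}^{\varepsilon}(Kr_{2})$ is covered by at most $N$ partition cells, where $N$ depends only on $K,C_{1},C_{2}$ through a volume comparison, so the first factor above is bounded by $(N\sigma)^{(p-2)/p}$ with $\sigma=\sup_{j}\varepsilon^{-3}\int_{P_{j}^{\varepsilon}}|u^{+}|^{p}$. Using in addition the finite--overlap bound $\nu(\Omega)$ for the balls $A_{j}^{\varepsilon}(Kr_{2})$, summation of the previous inequality over $j$ yields
\begin{equation*}
|u^{+}|_{\varepsilon,p}^{p} \le C'\,\sigma^{(p-2)/p}\,\|u\|_{\varepsilon}^{2},\qquad C'=C'(\Omega,p).
\end{equation*}
From Lemma \ref{lem:nehari} and the bound $I_{\varepsilon}(u)\le m_{\varepsilon}+\delta\le m_{\infty}+o(1)+\delta$ I obtain a uniform upper bound $\|u\|_{\varepsilon}^{2}\le M$, while the Nehari identity $|u^{+}|_{\varepsilon,p}^{p}=\|u\|_{\varepsilon}^{2}+\omega G_{\varepsilon}(u)\ge\inf_{{\mathcal N}_{\varepsilon}}\|u\|_{\varepsilon}^{2}$ together with $\inf_{{\mathcal N}_{\varepsilon}}\|u\|_{\varepsilon}>0$ (uniform in $\varepsilon$) gives $|u^{+}|_{\varepsilon,p}^{p}\ge\eta>0$ uniformly. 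Plugging both estimates into the displayed inequality forces $\sigma\ge(\eta/(C'M))^{p/(p-2)}$, contradicting $\sigma_{n}\to 0$; the lemma then holds with any $\gamma$ strictly smaller than this quantity.

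The main obstacle I foresee is securing the uniformity in $\varepsilon$ of the Sobolev constant for cells meeting $\partial\Omega$: this is precisely the point at which the Fermi coordinates of Section 2 and the metric expansions \eqref{eq:g1}--\eqref{eq:g3} enter, and one must check that the rescaled half--ball geometry is stable as $\varepsilon\to 0$. A subsidiary but essential item is the uniform--in--$\varepsilon$ lower bound $\inf_{{\mathcal N}_{\varepsilon}}\|u\|_{\varepsilon}\ge\eta_{0}>0$ together with the uniform upper bound $m_{\varepsilon}\le m_{\infty}+o(1)$; both are standard consequences of the Nehari equation and Sobolev embedding, but must be explicitly verified so that the resulting $\gamma$ is a genuine universal constant rather than an $\varepsilon$-dependent one.
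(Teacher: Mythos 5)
Your proposal is, at its core, the same argument as the paper's: decompose $|u^{+}|_{\varepsilon,p}^{p}$ over the partition, write $\|v\|_{L^{p}}^{p}=\|v\|_{L^{p}}^{p-2}\|v\|_{L^{p}}^{2}$ on each piece, control the $L^{2/p}$-part by a Sobolev inequality with an $\varepsilon$-uniform constant, sum using the finite overlap number $\nu(\Omega)$, and close with the Nehari identity $N_{\varepsilon}(u)=0$. The one technical divergence is where you get the uniform Sobolev constant: you restrict $u^{+}$ to the enlarged cells $A_{j}^{\varepsilon}(Kr_{2})$ and invoke a local embedding, which forces you to worry (as you correctly flag) about the uniform Lipschitz geometry of the rescaled cells and the Fermi-coordinate distortion \eqref{eq:g1}--\eqref{eq:g3}, and to pay an extra covering number $N$ to dominate $\int_{A_{j}}|u^{+}|^{p}$ by $N\sigma$. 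The paper sidesteps this entirely by multiplying $u^{+}$ by a cutoff $\chi_{\varepsilon}^{j}$ supported in $A_{j}^{\varepsilon}(Kr_{2})$ with $|\nabla\chi_{\varepsilon}^{j}|\le 2/(\bar K\varepsilon)$: the resulting $\tilde u_{j}$ lies in $H_{0}^{1}(\Omega)\subset H^{1}(\mathbb{R}^{3})$, so the global Sobolev constant applies with no discussion of cell geometry, at the mild cost of the gradient-of-cutoff term $4/\bar K^{2}$. Your route is workable but strictly more delicate at exactly the point you identify as the main obstacle.

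One genuine blemish in your closing step: you bound $|u^{+}|_{\varepsilon,p}^{p}\le C'\sigma^{(p-2)/p}\|u\|_{\varepsilon}^{2}$ and then insert $\|u\|_{\varepsilon}^{2}\le M$ with $M$ coming from $I_{\varepsilon}(u)\le m_{\varepsilon}+\delta$; since $M$ depends on $\delta$, your $\gamma=(\eta/(C'M))^{p/(p-2)}$ is $\delta$-dependent, whereas the lemma asserts a single $\gamma$ for all $\delta$. You already have everything needed to avoid this: since $\omega>0$ and $\psi_{\varepsilon}(u)\ge 0$, the Nehari identity gives $\|u\|_{\varepsilon}^{2}\le|u^{+}|_{\varepsilon,p}^{p}\le C'\sigma^{(p-2)/p}\|u\|_{\varepsilon}^{2}$, and dividing by $\|u\|_{\varepsilon}^{2}>0$ (using $\inf_{\mathcal{N}_{\varepsilon}}\|u\|_{\varepsilon}>0$ from Lemma \ref{lem:nehari}) yields $\sigma\ge(1/C')^{p/(p-2)}$ directly. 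This is what the paper does; note that it makes the energy bound $I_{\varepsilon}(u)\le m_{\varepsilon}+\delta$, and hence the contradiction/sequence framework, entirely unnecessary -- the conclusion holds for every $u\in\mathcal{N}_{\varepsilon}$.
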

\begin{proof}
We follows the proof of Lemma 5.3 of \cite{BBM} and of Lemma 10 of
\cite{GM10}. By Remark \ref{rem:limsup} we have that $\mathcal{N}_{\varepsilon}\cap I_{\varepsilon}^{m_{\varepsilon}+\delta}\neq\emptyset$.
For any function $u\in\mathcal{N}_{\varepsilon}\cap I_{\varepsilon}^{m_{\varepsilon}+\delta}$
we denote by $u_{j}^{+}$ the restriction of $u^{+}$ to the set $P_{j}^{\varepsilon}$.
Then we can write, since $I'(u)[u]=0$, 
\begin{align*}
\|u\|_{\varepsilon}^{2} & =|u^{+}|_{\varepsilon,p}^{p}-\frac{1}{\varepsilon^{3}}\int_{\Omega}\omega u^{2}\psi(u)\le|u^{+}|_{\varepsilon,p}^{p}\\
 & =\sum_{j}|u_{j}^{+}|_{\varepsilon,p}^{p}\le\max_{j}\left\{ |u_{j}^{+}|_{\varepsilon,p}^{p-2}\right\} \sum_{j}|u_{j}^{+}|_{\varepsilon,p}^{2}.
\end{align*}
We define the functions $\tilde{u}_{j}$ by using a smooth real cutoff
function $\chi_{\varepsilon}^{j}:\mathbb{R}^{3}\rightarrow[0,1]$
such that $|\nabla\chi_{\varepsilon}^{j}|\leq\frac{2}{\bar{K}\varepsilon}$
for some constant $\bar{K}$ and, $\chi_{\varepsilon}^{j}=1$ for
$x\in A_{j}^{\varepsilon}(r_{2}(\varepsilon))$ and $\chi_{\varepsilon}^{j}=0$
for $x$ outside $A_{j}^{\varepsilon}(Kr_{2}(\varepsilon))$. Also,
we define $\tilde{u}_{j}(x)=u^{+}(x)\chi_{\varepsilon}^{j}(x)$. It
holds $\tilde{u}_{j}\in H_{0}^{1}(\Omega)$, hence there exists a
positive constant $C$ such that, for any $j$, $|u_{j}^{+}|_{\varepsilon,p}^{2}\leq|\tilde{u}_{j}|_{\varepsilon,p}^{2}\leq C||\tilde{u}_{j}||_{\varepsilon}^{2}=C||\tilde{u}_{j}||_{\varepsilon,P_{j}^{\varepsilon}}^{2}+C||\tilde{u}_{j}||_{\varepsilon,A_{j}^{\varepsilon}(r_{2}(\varepsilon))\smallsetminus P_{j}^{\varepsilon}}^{2}$.
Moreover 
\begin{eqnarray*}
\int_{A_{j}^{\varepsilon}(r_{2}(\varepsilon))\smallsetminus P_{j}^{\varepsilon}}\left\vert \tilde{u}_{j}\right\vert ^{2}dx & \leq & \int_{A_{j}^{\varepsilon}(r_{2}(\varepsilon))\smallsetminus P_{j}^{\varepsilon}}\left\vert u^{+}\right\vert ^{2}dx;\\
\int_{A_{j}^{\varepsilon}(r_{2}(\varepsilon))\smallsetminus P_{j}^{\varepsilon}}\varepsilon^{2}\left\vert \nabla\tilde{u}_{j}\right\vert ^{2}dx & \leq & \int_{A_{j}^{\varepsilon}(r_{2}(\varepsilon))\smallsetminus P_{j}^{\varepsilon}}\left(\varepsilon^{2}\left\vert \nabla u^{+}\right\vert ^{2}+\frac{4}{\bar{K}^{2}}\left\vert u^{+}\right\vert ^{2}\right)dx.
\end{eqnarray*}
 Hence we obtain 
\begin{eqnarray*}
\sum_{j}|u_{j}^{+}|_{\varepsilon,p}^{2} & \leq & C\sum_{j}\left\vert \left\vert u^{+}\right\vert \right\vert _{\varepsilon}^{2}+C\left(\frac{4}{\bar{K}^{2}}+1\right)\nu(\Omega)\left\vert \left\vert u^{+}\right\vert \right\vert _{\varepsilon}^{2}\leq\\
 & \leq & C\left(\frac{4}{\bar{K}^{2}}+2\right)\nu(\Omega)\left\vert \left\vert u\right\vert \right\vert _{\varepsilon}^{2}.
\end{eqnarray*}
 We can conclude that 
\[
\max_{j}\left\{ \left(\frac{1}{\varepsilon^{3}}\int_{P_{j}^{\varepsilon}}\left\vert u^{+}\right\vert ^{p}dx\right)^{\frac{p-2}{p}}\right\} \geq\frac{1}{C\left(\frac{4}{\bar{K}^{2}}+2\right)\nu(\Omega)},
\]
 so the proof is complete. 
\end{proof}
\begin{lem}
\label{lem:crucial}Let $\gamma>0$. Suppose that there exist a sequence
of functions $u_{k}\in\mathcal{N}_{\varepsilon_{k}}\cap I_{\varepsilon_{k}}^{m_{\varepsilon_{k}}+\delta_{k}}$
with $\varepsilon_{k},\delta_{k}\rightarrow0$ and \textup{$\left|I'_{\varepsilon_{k}}(u_{k})[\varphi]\right|\le\sigma_{k}\|\varphi\|_{\varepsilon_{k}}$
where $\sigma_{k}\rightarrow0$ and a sequence of sets }$P_{k}^{\varepsilon_{k}}\subset{\mathcal P}_{\varepsilon_{k}}$
such that $\frac{1}{\varepsilon_{k}^{3}}\int_{P_{k}^{\varepsilon_{k}}}|u_{k}^{+}|^{p}dx\ge\gamma$.
Then 
\[
\liminf_{k\rightarrow+\infty}\frac{d(\partial\Omega,P_{k}^{\varepsilon_{k}})}{\varepsilon_{k}}=+\infty.
\]
\end{lem}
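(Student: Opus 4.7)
The plan is to argue by contradiction. Suppose that along a subsequence (still indexed by $k$) we have $d(\partial\Omega, P_k^{\varepsilon_k})/\varepsilon_k \le M$ for some $M > 0$. Pick $q_k \in P_k^{\varepsilon_k}$ and let $\xi_k \in \partial\Omega$ be a point realizing the distance, so $|q_k - \xi_k| \le M\varepsilon_k$. Since the diameter of $P_k^{\varepsilon_k}$ is at most $K r_2(\varepsilon_k) \le K C_2 \varepsilon_k$, after extracting a subsequence $\xi_k \to \xi_0 \in \partial\Omega$. I work in Fermi coordinates $F_{\xi_k} : B_{r}^{+} \to D(\xi_k,r)$ and perform a blow-up at scale $\varepsilon_k$: define
\[
w_k(y) := u_k\bigl(F_{\xi_k}(\varepsilon_k y)\bigr), \qquad \tilde\psi_k(y) := \psi_{\varepsilon_k}(u_k)\bigl(F_{\xi_k}(\varepsilon_k y)\bigr),
\]
for $y$ in the expanding half-ball $B^+_{r/\varepsilon_k}$. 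Both functions vanish on the flat portion of the boundary.

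The energy bound $I_{\varepsilon_k}(u_k) \le m_{\varepsilon_k} + \delta_k$ together with the Nehari identity in Lemma \ref{lem:nehari} gives $\|u_k\|_{\varepsilon_k}^2 \le M_1$ uniformly. Using the metric expansions \eqref{eq:g1}--\eqref{eq:g3}, the change of variables $x = F_{\xi_k}(\varepsilon_k y)$ converts this into a uniform $H^1$ bound for $w_k$ on every fixed half-ball $B_R^+$. By a diagonal argument, $w_k \rightharpoonup w$ weakly in $H^1_{\mathrm{loc}}(\overline{\mathbb R^3_+})$ and strongly in $L^q_{\mathrm{loc}}$ for $q < 6$, with $w = 0$ on $\partial \mathbb R^3_+$. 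A parallel argument for $\tilde\psi_k$, using the elliptic equation $-\Delta_{g_{\varepsilon_k}} \tilde\psi_k = q w_k^2$ (with $g_{\varepsilon_k} \to \delta$) together with boundary-zero and standard elliptic estimates, yields $\tilde\psi_k \to \psi_{\infty,+}(w)$ locally in $H^1$, where $\psi_{\infty,+}$ is the solution operator from \eqref{eq:psi+}.

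The concentration hypothesis $\varepsilon_k^{-3}\int_{P_k^{\varepsilon_k}} |u_k^+|^p dx \ge \gamma$ rewrites, after the change of variables, as $\int_{\widetilde P_k} |w_k^+|^p |g|^{1/2} dy \ge \gamma$, where $\widetilde P_k$ lies inside a uniformly bounded region of $\overline{\mathbb R^3_+}$ (since $|q_k-\xi_k| \le M\varepsilon_k$ and $\mathrm{diam}(P_k^{\varepsilon_k}) \le KC_2\varepsilon_k$). Strong $L^p$ convergence on this bounded region forces $\int |w^+|^p \ge \gamma/2 > 0$; in particular $w \not\equiv 0$. Next, testing the almost-criticality $|I'_{\varepsilon_k}(u_k)[\varphi]| \le \sigma_k \|\varphi\|_{\varepsilon_k}$ against rescaled test functions $\varphi(x) = \eta(F_{\xi_k}^{-1}(x)/\varepsilon_k)$ with $\eta \in C_c^\infty(\overline{\mathbb R^3_+})$ vanishing on $\partial \mathbb R^3_+$, and passing to the limit using the previous convergences, one finds that $w$ is a nontrivial nonnegative weak solution of the half-space reduced problem \eqref{eq:PLhalf-red}. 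By the preceding Lemma (the Esteban--Lions-type result for \eqref{eq:PLhalf}), this forces $w \equiv 0$, a contradiction.

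The main obstacle is the nonlocal term: one must show that the rescaled auxiliary function $\tilde\psi_k$ converges to $\psi_{\infty,+}(w)$ strongly enough to identify the limiting equation, despite the half-space being unbounded and the Green's function for \eqref{eq:psi+} being nonexplicit. This is handled by testing against compactly supported test functions, using that $u_k\psi_{\varepsilon_k}(u_k)\varphi$ involves only local values of $\tilde\psi_k$ in the rescaled picture, combined with the local $H^1$ strong convergence of $w_k$ on the support of the test function and elliptic regularity for $\tilde\psi_k$. A secondary, more technical, point is to ensure that the Fermi metric correction in \eqref{eq:g1}--\eqref{eq:g3} contributes only lower-order terms that vanish in the limit $\varepsilon_k \to 0$, which is immediate since $h_{ij}$ and $H$ are bounded on $\partial\Omega$.
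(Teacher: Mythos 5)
Your proposal is correct and follows essentially the same route as the paper: blow up in Fermi coordinates around a nearby boundary point, extract weak $H^1$ and $D^{1,2}$ limits $w$ and $\psi_{\infty,+}(w)$, identify the limiting half-space system by testing the almost-criticality against rescaled compactly supported test functions, and invoke the Esteban--Lions-type nonexistence lemma against the lower bound coming from the concentration hypothesis. The only cosmetic differences are that the paper multiplies by a cutoff to place $w_k$ globally in $H^1_0(\mathbb{R}^3_+)$ rather than using a diagonal $H^1_{\mathrm{loc}}$ argument, and that it derives the contradiction by showing $\gamma\le\int|w_k|^p\to 0$ rather than by first establishing $w\not\equiv 0$; these are logically equivalent.
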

\begin{proof}
By contradiction, suppose that, up to subsequence, $\frac{d(\partial\Omega,P_{k}^{\varepsilon_{k}})}{\varepsilon_{k}}\le R$
for some $R>0$. Take $q_{k}\in\partial\Omega$ such that $P_{k}^{\varepsilon_{k}}\subset D(q_{k},C\varepsilon_{k})\subset D(q_{k},r)$,
for some $C>0$ which does not depend on $\varepsilon_{k}$. This
is possible since $\frac{d(\partial\Omega,P_{k}^{\varepsilon_{k}})}{\varepsilon_{k}}\le R$
and the diameter of each $P_{k}^{\varepsilon_{k}}$ is bounded by
$C_{2}\varepsilon_{k}$ with $C_{2}$ independent of $\varepsilon_{k}$
. On $D(q_{k},r)$ we define the Fermi coordinates (see Def. \ref{def:Fermi})
\[
F_{q_{z}}(z):B_{r}^{+}\rightarrow D(q_{k},r).
\]
 Define a smooth cut off function $\chi:\mathbb{R}_{+}^{3}\rightarrow\mathbb{R}$
such that 
\[
\chi(z):=\left\{ \begin{array}{ccc}
1 &  & z\in B_{r/2}^{+}\\
0 &  & z\in\mathbb{R}_{+}^{3}\smallsetminus B_{r}^{+}
\end{array}\right.
\]
and the function $w_{k}:\mathbb{R}_{+}^{3}\rightarrow\mathbb{R}$
\[
w_{k}(z)=\left\{ \begin{array}{ccc}
u_{k}\left(F_{q_{k}}(\varepsilon_{k}z)\right)\chi(\varepsilon_{k}z) &  & z\in B_{r/\varepsilon_{k}}^{+}\\
0 &  & \text{elsewhere}
\end{array}\right..
\]
We have that $w_{k}\in H_{0}^{1}(\mathbb{R}_{+}^{3})$ and, by simple
computation, that $\|w_{k}\|_{H^{1}(\mathbb{R}_{+}^{3})}\le C\|u_{k}\|_{\varepsilon_{k}}\le C$
so $w_{k}$ converges to some function $w\in H_{0}^{1}(\mathbb{R}_{+}^{3})$,
weakly in $H_{0}^{1}(\mathbb{R}_{+}^{3})$ and strongly in $L_{\text{loc}}^{s}(\mathbb{R}_{+}^{3})$
for $2\le s<6.$ 

Let $\psi_{k}(x):=\psi_{\varepsilon_{k}}(u_{k})(x)$ where $\psi_{\varepsilon_{k}}(u_{k})$
solves $-\varepsilon_{k}^{2}\Delta v=qu_{k}^{2}\text{ in }\Omega$,
as defined in eq. (\ref{eq:psi-eps}) and define $\tilde{\psi}_{k}:\mathbb{R}_{+}^{3}\rightarrow\mathbb{R}$
as 
\[
\tilde{\psi}_{k}(z)=\left\{ \begin{array}{ccc}
\psi_{k}\left(F_{q_{k}}(\varepsilon_{k}z)\right)\chi(\varepsilon_{k}z) &  & z\in B_{r/\varepsilon_{k}}^{+}\\
0 &  & \text{elsewhere}
\end{array}\right..
\]
 and again $|\nabla\tilde{\psi}_{k}|_{L^{2}(\mathbb{R}_{+}^{3})}\le C\varepsilon_{k}^{2}|\nabla\psi_{k}|_{\varepsilon_{k},2}^{2}$.
Moreover, using that $\psi_{k}$ solves (\ref{eq:psi-eps}) we have
\[
\varepsilon_{k}^{2}|\nabla\psi_{k}|_{\varepsilon_{k},2}^{2}=\frac{\varepsilon_{k}^{2}}{\varepsilon_{k}^{3}}\int_{\Omega}|\nabla\psi_{k}|^{2}=\frac{1}{\varepsilon_{k}^{3}}q\int_{\Omega}u_{k}^{2}\psi_{k}\le C|u_{k}|_{\varepsilon,12/5}^{2}|\psi_{k}|_{\varepsilon,6}^{2}
\]
 so $\|\tilde{\psi}_{k}\|_{D^{1,2}(\mathbb{R}_{+}^{3})}\le C$ and
$\tilde{\psi}_{k}\rightharpoonup\bar{\psi}$ weakly in $D^{1,2}(\mathbb{R}_{+}^{3})$
and in $L^{6}(\mathbb{R}_{+}^{3})$ for some function $\bar{\psi}\in D^{1,2}(\mathbb{R}_{+}^{3})$
with $\bar{\psi}=0$ on $\partial\mathbb{R}_{+}^{3}$. We show that
$\bar{\psi}=\psi_{\infty,+}(w)$, where $\psi_{\infty,+}(w)$ is defined
in (\ref{eq:psi+})

Take a function $f\in C_{0}^{\infty}(\mathbb{R}_{+}^{3})$. We have
$\text{supp}(f)\subset B^{+}(0,T)\subset B_{r/2\varepsilon_{k}}^{+}$,
for some $T>0$, and for $k$ sufficently large. Here we denote by
$\text{supp}(f)$ the support of the function $f$. We define 
\[
f_{k}(x)=f\left(\frac{1}{\varepsilon_{k}}F_{q_{k}}^{-1}(x)\right)\in H_{0}^{1}(\Omega),
\]
so $\text{supp}(f_{k})\subset D(q_{k},r/2)$. Since $\psi_{k}=\psi_{\varepsilon_{k}}(u_{k})$
is a weak solution of (\ref{eq:psi-eps}), we have $\varepsilon_{k}^{2}\int_{\Omega}\nabla\psi_{k}\nabla f_{k}dx=q\int_{\Omega}u_{k}^{2}f_{k}dx$.
Now, by means of Fermi coordinates, with the change of variables $x=F_{q_{k}}(\varepsilon_{k}z)$
we have 
\begin{multline*}
\frac{1}{\varepsilon_{k}}\int_{\Omega}\nabla\psi_{k}\nabla f_{k}dx=\frac{1}{\varepsilon_{k}}\int_{D(q_{k},r/2)}\nabla\psi_{k}\nabla f_{k}dx=\int_{B_{r/2\varepsilon_{k}}^{+}}g_{ij}(\varepsilon_{k}z)\partial_{i}\tilde{\psi}_{k}\partial_{j}f|g(\varepsilon_{k}z)|^{\frac{1}{2}}dz\\
=\int_{\text{supp}(f)}\nabla\tilde{\psi}_{k}\nabla fdz+O(\varepsilon_{k})=\int_{\mathbb{R}_{+}^{3}}\nabla\tilde{\psi}_{k}\nabla fdz+O(\varepsilon_{k})\rightarrow\int_{\mathbb{R}_{+}^{3}}\nabla\bar{\psi}\nabla fdz.
\end{multline*}
In the same way 
\begin{multline*}
\frac{1}{\varepsilon_{k}^{3}}\int_{\Omega}u_{k}^{2}f_{k}dx=\frac{1}{\varepsilon_{k}^{3}}\int_{D(q_{k},r/2)}u_{k}^{2}f_{k}dxdx=\int_{B_{r/2\varepsilon_{k}}^{+}}w_{k}^{2}f|g(\varepsilon_{k}z)|^{\frac{1}{2}}dz\\
=\int_{\text{supp}(f)}w_{k}^{2}fdz+O(\varepsilon_{k})=\int_{\mathbb{R}_{+}^{3}}w_{k}^{2}fdz+O(\varepsilon_{k})\rightarrow\int_{\mathbb{R}_{+}^{3}}w^{2}fdz,
\end{multline*}
hence for any $f\in C_{0}^{\infty}(\mathbb{R}_{+}^{3})$ 
\[
\int_{\mathbb{R}_{+}^{3}}\nabla\bar{\psi}\nabla fdz=q\int_{\mathbb{R}_{+}^{3}}w^{2}fdz
\]
and we have proved that $\bar{\psi}=\psi_{\infty,+}(w)$, as claimed. 

In a similar way we want to prove that $w$ solves weakly 
\[
\left\{ \begin{array}{cc}
-\Delta w+w+\omega w\psi_{\infty,+}(w)=|w|^{p-2}w & \text{ in }\mathbb{R}_{+}^{3}\\
u,v=0 & \text{ on }\partial\mathbb{R}_{+}^{3}
\end{array}\right..
\]
Again, we take a function $f\in C_{0}^{\infty}(\mathbb{R}_{+}^{3})$
and in the same way we define $f_{k}(x)=f\left(\frac{1}{\varepsilon_{k}}F_{q_{k}}^{-1}(x)\right)$.
By hypothesis on $u_{k}$, we have $\left|I'_{\varepsilon_{k}}(u_{k})[f_{k}]\right|\le\sigma_{k}\|f_{k}\|_{\varepsilon_{k}}\le C\|f\|_{H^{1}(\mathbb{R}_{+}^{3})}\text{ where }\sigma_{k}\rightarrow0$,
we obtain
\begin{align*}
I_{\varepsilon_{k}}'(u_{k})[f_{k}] & =\frac{1}{\varepsilon_{k}^{3}}\int_{\Omega}\varepsilon_{k}^{2}\nabla u_{k}\nabla f_{k}+u_{k}f_{k}+\omega u_{k}\psi_{\varepsilon_{k}}(u_{k})f_{k}-(u_{k}^{+})^{p-1}f_{k}dx\\
 & =\frac{1}{\varepsilon_{k}^{3}}\int_{D(q_{k},r/2)}\varepsilon_{k}^{2}\nabla u_{k}\nabla f_{k}+u_{k}f_{k}+\omega u_{k}\psi_{\varepsilon_{k}}(u_{k})f_{k}-(u_{k}^{+})^{p-1}f_{k}dx\\
 & =\int_{\text{supp}(f)}\left(g_{ij}(\varepsilon_{k}z)\partial_{i}w_{k}\partial_{j}f+w_{k}f+\omega w_{k}\tilde{\psi}_{k}f-(w_{k}^{+})^{p-1}f\right)|g(\varepsilon_{k}z)|^{\frac{1}{2}}dx\\
 & =\int_{\text{supp}(f)}\nabla w_{k}\nabla f+w_{k}f+\omega w_{k}\tilde{\psi}_{k}f-(w_{k}^{+})^{p-1}fdx+O(\varepsilon_{k})
\end{align*}
and, since $w_{k}\rightarrow w$ and $\tilde{\psi}_{k}\rightarrow\psi_{\infty,+}(w)$
strongly in $L^{s}(\text{supp}(f))$ for $2\le s<6$ and $I_{\varepsilon_{k}}'(u_{k})[f_{k}]\rightarrow0,$
we conclude, as claimed, that for any $f\in C_{0}^{\infty}(\mathbb{R}_{+}^{3})$
\begin{align*}
0 & =\int_{\text{supp}(f)}\nabla w\nabla f+wf+\omega w\psi_{\infty,+}(w)f-(w^{+})^{p-1}fdx\\
 & =\int_{\mathbb{R}_{+}^{3}}\nabla w\nabla f+wf+\omega w\psi_{\infty,+}(w)f-(w^{+})^{p-1}fdx,
\end{align*}
so the pair $(w,\psi_{\infty,+}(w))$ is a solution of (\ref{eq:PLhalf}).
By {[}Esteban-Lions{]}, we have then that $(w,\psi_{\infty}(w))\equiv(0,0)$.
Thus $w_{k}\rightarrow0$ strongly in $L_{\text{loc}}^{s}(\mathbb{R}_{+}^{3})$
for $2\le s<6$. This gives us the contradiction, indeed, 
\[
0<\gamma\le\frac{1}{\varepsilon_{k}^{3}}\int_{P_{k}^{\varepsilon_{k}}}|u_{k}^{+}|^{p}dx\le\frac{1}{\varepsilon_{k}^{3}}\int_{D(q_{k},K\varepsilon_{k})}|u_{k}|^{p}dx=\int_{B_{K}^{+}}|w_{k}|^{p}dz+O(\varepsilon_{k})\rightarrow0
\]
 since $w_{k}\rightarrow0$ strongly in $L^{p}(B_{K}^{+})$. This
ends the proof. 
\end{proof}

\section{\label{sec:phieps}The map $\Phi_{\varepsilon}$ }

For every $\xi\in\Omega^{-}$ we define the function
\begin{equation}
W_{\xi,\varepsilon}(x)=U_{\varepsilon}(x-\xi)\chi(|x-\xi|)\label{eq:Weps}
\end{equation}
where $\chi:\mathbb{R}^{+}\rightarrow\mathbb{R}^{+}$ is a cut off
function, that is $\chi\equiv1$ for $t\in[0,r/2)$, $\chi\equiv0$
for $t>r$ and $|\chi'(t)|\le2/r$.

We can define a map
\begin{eqnarray*}
\Phi_{\varepsilon}:\Omega^{-}\rightarrow{\mathcal N}_{\varepsilon} & ; & \Phi_{\varepsilon}(\xi)=t_{\varepsilon}(W_{\xi,\varepsilon})W_{\xi,\varepsilon}
\end{eqnarray*}

\begin{rem}
\label{w}The following limits hold uniformly with respect to $\xi\in\Omega$
\begin{eqnarray*}
\|W_{\varepsilon,\xi}\|_{\varepsilon} & \rightarrow & \|U\|_{H^{1}(\mathbb{R}^{3})}\\
|W_{\varepsilon,\xi}|_{\varepsilon,t} & \rightarrow & \|U\|_{L^{t}(\mathbb{R}^{3})}\text{ for all }2\le t\le6
\end{eqnarray*}
\end{rem}
\begin{lem}
\label{lem:stimaGeps}The following limit holds uniformly with respect
to $\xi\in\Omega$:
\[
\lim_{\varepsilon\rightarrow0}G_{\varepsilon}(W_{\varepsilon,\xi})=G(U)=\int_{\mathbb{R}^{3}}qU^{2}\psi(U)dx
\]
\end{lem}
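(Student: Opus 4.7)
The plan is to perform a blow-up change of variables around $\xi$. Setting $y=(x-\xi)/\varepsilon$, I define $\Omega_{\varepsilon,\xi}:=\varepsilon^{-1}(\Omega-\xi)$, $\tilde w_{\varepsilon,\xi}(y):=W_{\varepsilon,\xi}(\xi+\varepsilon y)=U(y)\chi(\varepsilon|y|)$, and $\tilde\psi_{\varepsilon,\xi}(y):=\psi_\varepsilon(W_{\varepsilon,\xi})(\xi+\varepsilon y)$. A direct computation using the chain rule together with (\ref{eq:psi-eps}) shows that $\tilde\psi_{\varepsilon,\xi}$ solves $-\Delta_y \tilde\psi_{\varepsilon,\xi}=qU^2\chi^2(\varepsilon|y|)$ in $\Omega_{\varepsilon,\xi}$ with zero Dirichlet data, while the corresponding change of variables in the definition of $G_\varepsilon$ yields
\[
G_\varepsilon(W_{\varepsilon,\xi})=\int_{\Omega_{\varepsilon,\xi}}U^2(y)\chi^2(\varepsilon|y|)\,\tilde\psi_{\varepsilon,\xi}(y)\,dy.
\]

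The next step is to show that $\tilde\psi_{\varepsilon,\xi}\to\psi_\infty(U)$ in a suitable weak sense. Extending $\tilde\psi_{\varepsilon,\xi}$ by zero to $\mathbb{R}^{3}$ and testing the equation against itself, together with H\"older and Sobolev inequalities, gives a uniform bound $\|\tilde\psi_{\varepsilon,\xi}\|_{D^{1,2}(\mathbb{R}^{3})}\le C$ independent of $\varepsilon$ and of $\xi$. For any $f\in C_{0}^{\infty}(\mathbb{R}^{3})$, once $\varepsilon$ is small enough the support of $f$ lies inside $\Omega_{\varepsilon,\xi}$, so testing and dominated convergence give
\[
\int_{\mathbb{R}^{3}}\nabla\tilde\psi_{\varepsilon,\xi}\cdot\nabla f\,dy=\int_{\mathbb{R}^{3}}qU^{2}\chi^{2}(\varepsilon|y|)f\,dy\longrightarrow\int_{\mathbb{R}^{3}}qU^{2}f\,dy.
\]
Uniqueness of the $D^{1,2}(\mathbb{R}^{3})$ solution of $-\Delta v=qU^{2}$ identifies the weak limit as $\psi_{\infty}(U)$, so the whole family $\tilde\psi_{\varepsilon,\xi}$ converges weakly in $D^{1,2}(\mathbb{R}^{3})$, and therefore weakly in $L^{6}(\mathbb{R}^{3})$, to $\psi_{\infty}(U)$.

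To pass to the limit in the integral defining $G_{\varepsilon}(W_{\varepsilon,\xi})$, I would use that $U$ has exponential decay, so $U^{2}\in L^{6/5}(\mathbb{R}^{3})$ and $U^{2}\chi^{2}(\varepsilon|\cdot|)\to U^{2}$ strongly in $L^{6/5}(\mathbb{R}^{3})$ by dominated convergence; pairing the strong $L^{6/5}$ convergence of the weight against the weak $L^{6}$ convergence of $\tilde\psi_{\varepsilon,\xi}$ yields
\[
G_{\varepsilon}(W_{\varepsilon,\xi})\longrightarrow\int_{\mathbb{R}^{3}}U^{2}\psi_{\infty}(U)\,dy=G(U).
\]
To upgrade to uniform convergence in $\xi$ I would argue by contradiction: any sequence $(\varepsilon_{k},\xi_{k})$ violating the stated limit would furnish a subsequence to which the previous argument applies verbatim, producing a contradiction. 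The only nontrivial place where $\xi$ enters is in ensuring that $\Omega_{\varepsilon,\xi}$ exhausts $\mathbb{R}^{3}$ uniformly in $\xi$, which is where the hard part lies; this reduces to the observation that $d(\xi,\partial\Omega)$ is bounded below by a positive constant on the set of admissible $\xi$ (those for which $B(\xi,r)\subset\Omega$, so that $W_{\varepsilon,\xi}$ is well defined), and hence $\Omega_{\varepsilon,\xi}\supset B(0,c/\varepsilon)$ uniformly.
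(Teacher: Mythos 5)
Your proposal is correct and follows essentially the same route as the paper: the blow-up $y=(x-\xi)/\varepsilon$, the rescaled equation $-\Delta\tilde\psi=qU^{2}\chi^{2}(\varepsilon|\cdot|)$, a uniform $D^{1,2}$ bound, identification of the weak limit as $\psi_{\infty}(U)$ by testing against $C_{0}^{\infty}$ functions, and passage to the limit by pairing strong $L^{6/5}$ convergence of the weight against weak $L^{6}$ convergence of $\tilde\psi$ (the paper phrases this via $L^{12/5}$ convergence of $U\chi(\varepsilon\cdot)$, which is the same estimate). Your explicit remark on uniformity in $\xi$, via the lower bound $d(\xi,\partial\Omega)\ge r$ for $\xi\in\Omega^{-}$ so that $\Omega_{\varepsilon,\xi}\supset B(0,r/\varepsilon)$, is exactly what the paper uses implicitly; note that exponential decay of $U$ is not needed, since $U\in H^{1}(\mathbb{R}^{3})\subset L^{12/5}(\mathbb{R}^{3})$ already gives $U^{2}\in L^{6/5}(\mathbb{R}^{3})$.
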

\begin{proof}
To simplify the notation, set $\psi_{\varepsilon}(x):=\psi_{\varepsilon}(W_{\xi,\varepsilon})(x)$.
By definition, $\psi_{\varepsilon}(x)$ solves
\[
-\varepsilon^{2}\Delta\psi_{\varepsilon}=qW_{\xi,\varepsilon}^{2}.
\]
Also, let us define $\tilde{\psi}_{\varepsilon}(z)=\psi_{\varepsilon}(\varepsilon z+\xi)$

By change of variables, we have that 
\begin{equation}
-\Delta_{z}\tilde{\psi}_{\varepsilon}(z)=-\Delta_{z}\psi_{\varepsilon}(\varepsilon z+\xi)=-\varepsilon^{2}\left(\Delta\psi_{\varepsilon}\right)(\varepsilon z+\xi)=qW_{\xi,\varepsilon}^{2}(\varepsilon z+\xi)=qU^{2}(z)\chi^{2}(\varepsilon z)\label{eq:psitilde}
\end{equation}
for any $z$ such that $\varepsilon z+\xi\in\Omega$. Let us call
\[
\Omega_{\varepsilon}=\left\{ z\in\mathbb{R}^{3}\ :\ \varepsilon z+\xi\in\Omega\right\} .
\]
Since $\xi\in\Omega^{-}$ we have that $B(0,r/\varepsilon)\subset\Omega_{\varepsilon}$
so, as $\varepsilon\rightarrow0$, $\Omega_{\varepsilon}\nearrow\mathbb{R}^{3}$
. Also we extend $\tilde{\psi}_{\varepsilon}$ trivially by 0 outside
$\Omega_{\varepsilon}$ (with abuse of notation we still call the
extension $\tilde{\psi}_{\varepsilon}$)

By (\ref{eq:psitilde}), we have that 
\[
\|\nabla\tilde{\psi}_{\varepsilon}\|_{2}^{2}=q\int U^{2}(z)\chi^{2}(\varepsilon z)\tilde{\psi}_{\varepsilon}(z)\le C\|U\|_{12/5}^{2}\|\tilde{\psi}_{\varepsilon}\|_{6}
\]
that implies that $\tilde{\psi}_{\varepsilon}$ is bounded in $D^{1,2}(\mathbb{R}^{3})$.
So there exists a $\bar{\psi}\in D^{1,2}(\mathbb{R}^{3})$ such that
\[
\tilde{\psi}_{\varepsilon}\rightharpoonup\bar{\psi}\text{ in }D^{1,2}(\mathbb{R}^{3})\text{ and in }L^{6}(\mathbb{R}^{3})\text{ while }\varepsilon\rightarrow0.
\]
We have that $\bar{\psi}$ is a weak solution of $-\Delta v=qU^{2}$,
that is $\bar{\psi}=\psi_{\infty}(U)$. In fact, for any $\varphi\in C_{0}^{\infty}(\mathbb{R}^{3})$,
we have that the support of $\varphi$ is eventually contained in
$\Omega_{\varepsilon}$ and it holds
\begin{align*}
\int_{\mathbb{R}^{3}}\nabla\tilde{\psi}_{\varepsilon}\nabla\varphi & =\int_{\Omega_{\varepsilon}}\nabla\tilde{\psi}_{\varepsilon}\nabla\varphi=-\int_{\Omega_{\varepsilon}}\Delta\tilde{\psi}_{\varepsilon}\varphi=q\int_{\Omega_{\varepsilon}}U^{2}(z)\chi^{2}(\varepsilon z)\varphi\\
 & =q\int_{\mathbb{R}^{3}}U^{2}(z)\chi^{2}(\varepsilon z)\varphi\rightarrow q\int_{\mathbb{R}^{3}}U^{2}(z)\varphi
\end{align*}
as $\varepsilon\rightarrow0$, thus $\int_{\mathbb{R}^{3}}\nabla\bar{\psi}\nabla\varphi=q\int_{\mathbb{R}^{3}}U^{2}(z)\varphi$
and $\bar{\psi}=\psi_{\infty}(U)$, as claimed.

Now we can conclude that
\begin{align*}
G_{\varepsilon}(W_{\varepsilon,\xi}) & =\frac{1}{\varepsilon^{3}}\int_{\Omega}W_{\xi,\varepsilon}^{2}(x)\psi_{\varepsilon}(x)dx=\int_{\mathbb{R}^{3}}W_{\xi,\varepsilon}^{2}(\varepsilon z+\xi)\psi_{\varepsilon}(\varepsilon z+\xi)dy\\
 & =\int_{\mathbb{R}^{3}}U^{2}(z)\chi^{2}(\varepsilon z)\tilde{\psi}_{\varepsilon}(z)dz\rightarrow\int_{\mathbb{R}^{3}}U^{2}\psi_{\infty}(U)dz
\end{align*}
since $\tilde{\psi}_{\varepsilon}\rightharpoonup\psi_{\infty}(U)$
in $L^{6}(\mathbb{R}^{3})$ and $U(x)\chi(\varepsilon x)\rightarrow U(x)$
in $L^{12/5}(\mathbb{R}^{3})$. 
\end{proof}
\begin{prop}
\label{prop:phieps}For all $\varepsilon>0$ the map $\Phi_{\varepsilon}$
is continuous. Moreover for any $\delta>0$ there exists $\varepsilon_{0}=\varepsilon_{0}(\delta)$
such that, if $\varepsilon<\varepsilon_{0}$ then $I_{\varepsilon}\left(\Phi_{\varepsilon}(\xi)\right)<m_{\infty}+\delta$.
\end{prop}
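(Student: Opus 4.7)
The plan is to split the proposition into two separate claims and handle each in turn. For the continuity of $\Phi_{\varepsilon}$ at fixed $\varepsilon>0$, I would first argue that $\xi\mapsto W_{\xi,\varepsilon}$ is continuous from $\Omega^{-}$ into $H_{0}^{1}(\Omega)$: since $U\in H^{1}(\mathbb{R}^{3})$, translations are continuous in $L^{2}$ and $L^{6}$, and the cutoff in \eqref{eq:Weps} is a fixed smooth function, hence both $\nabla W_{\xi,\varepsilon}$ and $W_{\xi,\varepsilon}$ depend continuously on $\xi$ in $L^{2}(\Omega)$. Composing with the map $w\mapsto t_{\varepsilon}(w)$, which is continuous on the open set $\{|w^{+}|_{\varepsilon,p}>0\}$ by the implicit function theorem applied to the defining equation of Lemma \ref{lem:nehari} (the uniqueness and nondegeneracy of the critical point of $H$ gives $H''(t_{\varepsilon})\neq 0$), yields continuity of $\Phi_{\varepsilon}$.

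For the energy estimate, the strategy is to show that $\Phi_{\varepsilon}(\xi)$ is asymptotically a rescaled copy of the ground state $U$ of the limit problem, and therefore its energy tends to $m_{\infty}$ uniformly in $\xi\in\Omega^{-}$. By Remark \ref{w} and Lemma \ref{lem:stimaGeps}, the quantities $\|W_{\xi,\varepsilon}\|_{\varepsilon}^{2}$, $|W_{\xi,\varepsilon}^{+}|_{\varepsilon,p}^{p}$ and $G_{\varepsilon}(W_{\xi,\varepsilon})$ converge, uniformly with respect to $\xi\in\Omega^{-}$, to $\|U\|_{H^{1}(\mathbb{R}^{3})}^{2}$, $|U|_{p}^{p}$ and $G(U)$ respectively. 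The positive number $t_{\varepsilon}=t_{\varepsilon}(W_{\xi,\varepsilon})$ is the unique solution of
\[
\|W_{\xi,\varepsilon}\|_{\varepsilon}^{2}+t^{2}\omega G_{\varepsilon}(W_{\xi,\varepsilon})=t^{p-2}|W_{\xi,\varepsilon}^{+}|_{\varepsilon,p}^{p},
\]
and the analogous limiting equation at $U$ is satisfied by $t=1$ because $U\in\mathcal{N}_{\infty}$. Since $p>4>2$, the right-hand side strictly dominates the left-hand side for large $t$ and the map $t\mapsto t^{p-2}|U|_{p}^{p}-t^{2}\omega G(U)$ is strictly increasing past its root, so a standard implicit-function / uniform continuity argument gives $t_{\varepsilon}(W_{\xi,\varepsilon})\to 1$ uniformly in $\xi\in\Omega^{-}$.

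Plugging these uniform limits into the explicit formula
\[
I_{\varepsilon}(\Phi_{\varepsilon}(\xi))=\frac{t_{\varepsilon}^{2}}{2}\|W_{\xi,\varepsilon}\|_{\varepsilon}^{2}+\frac{t_{\varepsilon}^{4}\omega}{4}G_{\varepsilon}(W_{\xi,\varepsilon})-\frac{t_{\varepsilon}^{p}}{p}|W_{\xi,\varepsilon}^{+}|_{\varepsilon,p}^{p}
\]
yields $I_{\varepsilon}(\Phi_{\varepsilon}(\xi))\to I_{\infty}(U)=m_{\infty}$ uniformly in $\xi\in\Omega^{-}$. Given any $\delta>0$ one then chooses $\varepsilon_{0}(\delta)$ so small that the difference is less than $\delta$ for every $\xi$, which is the claimed bound.

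The main obstacle is the uniformity in $\xi$; all convergences above need to hold uniformly rather than merely pointwise. This is where the restriction $\xi\in\Omega^{-}$ enters decisively: the condition $d(\xi,\partial\Omega)>r$ ensures $B(\xi,r)\subset\Omega$, so after the change of variables $z=(x-\xi)/\varepsilon$ the domain of integration always contains $B(0,r/\varepsilon)$, independently of $\xi$. This uniform ``room'' around $\xi$ lets the argument in the proof of Lemma \ref{lem:stimaGeps} (and the analogous uniform convergence of the $H_{\varepsilon}$ and $L^{p}_{\varepsilon}$ norms) be carried out with error estimates that do not depend on $\xi$, since they reduce to tail estimates of the fixed integrable functions $U^{2}$, $|\nabla U|^{2}$, $|U|^{p}$ and $U^{2}\psi_{\infty}(U)$ outside a ball of radius $r/\varepsilon$.
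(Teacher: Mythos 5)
Your proposal is correct and follows essentially the same route as the paper: uniform convergence of $\|W_{\xi,\varepsilon}\|_{\varepsilon}$, $|W_{\xi,\varepsilon}|_{\varepsilon,p}$ (Remark \ref{w}) and $G_{\varepsilon}(W_{\xi,\varepsilon})$ (Lemma \ref{lem:stimaGeps}), the Nehari equation to deduce $t_{\varepsilon}(W_{\xi,\varepsilon})\to1$ uniformly using that $t=1$ solves the limiting equation for $U\in\mathcal{N}_{\infty}$, and then passage to the limit in $I_{\varepsilon}(t_{\varepsilon}W_{\xi,\varepsilon})\to m_{\infty}$. The only cosmetic difference is that you substitute into the full expression of $I_{\varepsilon}$ while the paper uses the reduced form \eqref{eq:I-nehari} valid on the Nehari manifold; your added detail on continuity and on the role of $B(0,r/\varepsilon)\subset\Omega_{\varepsilon}$ for uniformity is consistent with what the paper leaves implicit.
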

\begin{proof}
It is easy to see that $\Phi_{\varepsilon}$ is continuous because
$t_{\varepsilon}(w)$ depends continuously on $w\in H_{0}^{1}$.

At this point we prove that $t_{\varepsilon}(W_{\varepsilon,\xi})\rightarrow1$
uniformly with respect to $\xi\in\Omega$. In fact, by Lemma \ref{lem:nehari},
$t_{\varepsilon}(W_{\varepsilon,\xi})$ is the unique solution of
\[
\|W_{\varepsilon,\xi}\|_{\varepsilon}^{2}+t^{2}\omega G_{\varepsilon}(W_{\varepsilon,\xi})-t^{p-2}|W_{\varepsilon,\xi}|_{\varepsilon,p}^{p}=0.
\]
By Remark \ref{w} and Lemma \ref{lem:stimaGeps} we have the claim.
In fact, we recall that, since $U$ is a solution of (\ref{eq:PL})
it holds $\|U\|_{H^{1}(\mathbb{R}^{3})}^{2}+\omega G(U)-|U|_{p}^{p}=0$.

At this point, we have 
\[
I_{\varepsilon}\left(t_{\varepsilon}(W_{\varepsilon,\xi})W_{\varepsilon,\xi}\right)=\left(\frac{1}{2}-\frac{1}{p}\right)\|W_{\varepsilon,\xi}\|_{\varepsilon}^{2}t_{\varepsilon}^{2}+\omega\left(\frac{1}{4}-\frac{1}{p}\right)t_{\varepsilon}^{4}G_{\varepsilon}(W_{\varepsilon,\xi})
\]
Again, by Remark \ref{w} and Lemma \ref{lem:stimaGeps} and since
$t_{\varepsilon}(W_{\varepsilon,\xi})\rightarrow1$ we have

\[
I_{\varepsilon}\left(t_{\varepsilon}(W_{\varepsilon,\xi})W_{\varepsilon,\xi}\right)\rightarrow\left(\frac{1}{2}-\frac{1}{p}\right)\|U\|_{H^{1}(\mathbb{R}^{3})}^{2}+\omega\left(\frac{1}{4}-\frac{1}{p}\right)G(U)=m_{\infty}
\]
that concludes the proof.
\end{proof}
\begin{rem}
\label{rem:limsup}By Proposition \ref{prop:phieps} we have that 

\begin{equation}
\limsup_{\varepsilon\rightarrow0}m_{\varepsilon}\le m_{\infty.}\label{eq:limsup}
\end{equation}
\end{rem}

\section{\label{sec:barycenter}The map $\beta$}

For any $u\in{\mathcal N}_{\varepsilon}$ we can define a point $\beta(u)\in\mathbb{R}^{3}$
by 
\[
\beta(u)=\frac{\int_{\Omega}x\Gamma(u)dx}{\int_{\Omega}\Gamma(u)dx}
\]
where $\Gamma(u)=\frac{1}{4}\left[\frac{1}{\varepsilon}|\nabla u|^{2}+\frac{1}{\varepsilon^{3}}|u|^{2}\right]+\left(\frac{1}{4}-\frac{1}{p}\right)\frac{1}{\varepsilon^{3}}|u^{+}|^{p}$.
We notice that, since $4<p<6,$ $\Gamma(u)\ge0$. 
\begin{lem}
The function $\beta$ is well defined in ${\mathcal N}_{\varepsilon}$. 
\end{lem}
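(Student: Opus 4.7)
The plan is to show that the denominator $\int_\Omega \Gamma(u)\,dx$ is strictly positive for every $u \in \mathcal{N}_\varepsilon$, which is the only thing that needs justification (the numerator is automatically finite because $\Omega$ is bounded and $\Gamma(u) \in L^1(\Omega)$ for every $u \in H^1_0(\Omega)$).

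First, I would observe that the integrand $\Gamma(u)$ is pointwise nonnegative. The gradient and $L^2$ contributions are manifestly nonnegative, and the last term has coefficient $\frac{1}{4} - \frac{1}{p}$, which is strictly positive under the standing assumption $4 < p < 6$. Hence $\int_\Omega \Gamma(u)\,dx \ge 0$, and the issue reduces to ruling out equality.

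Next, integrating the definition of $\Gamma(u)$ and using the normalization of $\|\cdot\|_\varepsilon$ and $|\cdot|_{\varepsilon,p}$ gives the identity
\[
\int_\Omega \Gamma(u)\,dx = \frac{1}{4}\|u\|_\varepsilon^2 + \left(\frac{1}{4}-\frac{1}{p}\right)|u^+|_{\varepsilon,p}^p,
\]
which, by the last formula of Lemma \ref{lem:nehari}, equals $I_\varepsilon(u)$ for every $u \in \mathcal{N}_\varepsilon$. Thus I would reduce the well-definedness to the positivity of this expression.

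Finally, I would invoke Lemma \ref{lem:nehari}, which guarantees $\inf_{\mathcal{N}_\varepsilon}\|u\|_\varepsilon > 0$. Combined with the preceding identity, this yields
\[
\int_\Omega \Gamma(u)\,dx \ge \frac{1}{4}\|u\|_\varepsilon^2 \ge \frac{1}{4}\bigl(\inf_{\mathcal{N}_\varepsilon}\|\cdot\|_\varepsilon\bigr)^2 > 0,
\]
so the denominator is bounded away from zero uniformly on $\mathcal{N}_\varepsilon$ and $\beta(u)$ is well defined. There is no real obstacle here; the only subtlety worth pointing out is the use of the hypothesis $p > 4$, which is essential for the sign of the last coefficient in $\Gamma$ and hence for the pointwise nonnegativity (and sign-definiteness) that makes $\beta$ a genuine weighted average.
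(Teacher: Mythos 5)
Your argument is correct, but it reaches the conclusion by a different route than the paper. You reduce everything to the assertion $\inf_{{\mathcal N}_{\varepsilon}}\|u\|_{\varepsilon}>0$ stated in Lemma \ref{lem:nehari}, together with the pointwise inequality $\Gamma(u)\ge0$ (which, as you correctly note, hinges on $p>4$) and the identity $\int_{\Omega}\Gamma(u)\,dx=\tfrac14\|u\|_{\varepsilon}^{2}+\left(\tfrac14-\tfrac1p\right)|u^{+}|_{\varepsilon,p}^{p}$. The paper instead does not invoke that infimum bound: it writes $\int_{\Omega}\Gamma(u)\,dx=I_{\varepsilon}(u)\ge m_{\varepsilon}$ and then proves directly that $m_{\varepsilon}\ge\alpha>0$, by observing that for $|w|_{\varepsilon,p}=1$ one has $I_{\varepsilon}(t_{\varepsilon}(w)w)\ge\left(\tfrac14-\tfrac1p\right)t_{\varepsilon}(w)^{p}$ and then showing $\inf_{|w|_{\varepsilon,p}=1}t_{\varepsilon}(w)>0$ by contradiction from the Nehari identity and the embedding $|w|_{\varepsilon,p}\le C\|w\|_{\varepsilon}$ with $C$ independent of $\varepsilon$. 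The two arguments are essentially equivalent in content (a uniform positive lower bound on the denominator over ${\mathcal N}_{\varepsilon}$); yours is shorter because it takes the unproved claim of Lemma \ref{lem:nehari} as given, while the paper's version is self-contained on this point and, as a by-product, yields a lower bound for $m_{\varepsilon}$ that is uniform in $\varepsilon$, a fact that is consistent with (and needed for) the later limit $m_{\varepsilon}\to m_{\infty}$. Either way the lemma is established, so your proposal is acceptable.
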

\begin{proof}
We have that $\int_{\Omega}\Gamma(u)dx=I_{\varepsilon}(u)\ge m_{\varepsilon}$
if $u\in{\mathcal N}_{\varepsilon}$. So, we want to prove that $m_{\varepsilon}\ge\alpha$
for some $\alpha>0$. 

Take $w$ such that $|w|_{\varepsilon,p}=1$, and $t_{\varepsilon}=t_{\varepsilon}(w)$
such that $t_{\varepsilon}w\in{\mathcal N}_{\varepsilon}$. By (\ref{eq:I-nehari})
we have
\[
I_{\varepsilon}(t_{\varepsilon}w)=\frac{t_{\varepsilon}^{2}}{4}\|w\|_{\varepsilon}^{2}+\left(\frac{1}{4}-\frac{1}{p}\right)t_{\varepsilon}^{p}\ge\left(\frac{1}{4}-\frac{1}{p}\right)t_{\varepsilon}^{p}.
\]
Moreover, we have that $\inf_{|w|_{\varepsilon,p}=1}t_{\varepsilon}(w)>0$.
In fact, suppose that there exists a sequence $w_{n}$ such that $|w_{n}|_{\varepsilon,p}=1$
and $t_{\varepsilon}(w_{n})\rightarrow0$. Since $t_{\varepsilon}(w_{n})w_{n}\in{\mathcal N}_{\varepsilon}$
it holds
\[
1=|w_{n}|_{\varepsilon,p}=\frac{1}{t_{\varepsilon}(w_{n})^{p-2}}\|w_{n}\|_{\varepsilon}^{2}+\omega G_{\varepsilon}(t_{\varepsilon}(w_{n}))\ge\frac{1}{t_{\varepsilon}(w_{n})^{p-2}}\|w_{n}\|_{\varepsilon}^{2}.
\]
Also, we have that there exists a constant $C>0$ which does not depend
on $\varepsilon$ such that $|w_{n}|_{\varepsilon,p}\le C\|w_{n}\|_{\varepsilon}$,
so 
\[
1\ge\frac{1}{Ct_{\varepsilon}(w_{n})^{p-2}}\rightarrow+\infty
\]
that is a contradiction. This proves that $m_{\varepsilon}\ge\alpha$
for some $\alpha>0$ and hence that $\beta$ is well defined in ${\mathcal N}_{\varepsilon}$.
\end{proof}
Now we have to prove that, if $u\in{\mathcal N}_{\varepsilon}\cap I_{\varepsilon}^{m_{\infty}+\delta}$
then $\beta(u)\in\Omega^{+}$.
\begin{prop}
\label{prop:conc}For any $\eta\in(0,1)$ there exists $\delta_{0}<m_{\infty}$
such that for any $\delta\in(0,\delta_{0})$ and any $\varepsilon\in(0,\varepsilon_{0}(\delta))$
as in Proposition \ref{prop:phieps}, for any function $u\in{\mathcal N}_{\varepsilon}\cap I_{\varepsilon}^{m_{\infty}+\delta}$
we can find a point $q=q(u)\in\Omega$ such that 
\[
\int_{B(q,r/2)\cap\Omega}\Gamma(u)>\left(1-\eta\right)m_{\infty}.
\]
\end{prop}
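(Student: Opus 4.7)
The plan is to argue by contradiction, combining Lemma \ref{lem:gamma} (to locate a concentration cell), Lemma \ref{lem:crucial} (to push the cell away from $\partial\Omega$), a blow-up around the cell to reduce to the limit problem (\ref{eq:PL}) on $\mathbb{R}^{3}$, and a Brezis--Lieb splitting to extract a ground state of $I_{\infty}$ carrying essentially all of the energy. Suppose the statement fails: there exist $\eta\in(0,1)$ and sequences $\delta_{k}\downarrow 0$, $\varepsilon_{k}\downarrow 0$, $u_{k}\in\mathcal{N}_{\varepsilon_{k}}\cap I_{\varepsilon_{k}}^{m_{\infty}+\delta_{k}}$ with $\int_{B(q,r/2)\cap\Omega}\Gamma(u_{k})\,dx\le(1-\eta)m_{\infty}$ for every $q\in\Omega$. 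By Remark \ref{rem:limsup}, $\limsup m_{\varepsilon_{k}}\le m_{\infty}$ (and $m_{\varepsilon_{k}}\ge\alpha>0$), so $u_{k}\in\mathcal{N}_{\varepsilon_{k}}\cap I_{\varepsilon_{k}}^{m_{\varepsilon_{k}}+\tilde{\delta}_{k}}$ for some $\tilde{\delta}_{k}\downarrow 0$; Lemma \ref{lem:gamma} then provides a cell $P_{k}\in\mathcal{P}^{\varepsilon_{k}}$ and a basepoint $q_{k}\in\bar{\Omega}$ with $\frac{1}{\varepsilon_{k}^{3}}\int_{P_{k}}|u_{k}^{+}|^{p}\,dx\ge\gamma$. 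A small Ekeland perturbation on the $C^{2}$ manifold $\mathcal{N}_{\varepsilon_{k}}$, of size $O(\sqrt{\tilde{\delta}_{k}})$, replaces $u_{k}$ by a nearby sequence satisfying $\|I'_{\varepsilon_{k}}(u_{k})\|_{*}\to 0$ while preserving the sublevel and the $L^{p}$-concentration (down to $\gamma/2$) on $P_{k}$; then Lemma \ref{lem:crucial} forces $d(\partial\Omega,P_{k})/\varepsilon_{k}\to\infty$.

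Next I blow up: set $w_{k}(z):=u_{k}(q_{k}+\varepsilon_{k}z)\chi(\varepsilon_{k}z)$ and $\tilde{\psi}_{k}(z):=\psi_{\varepsilon_{k}}(u_{k})(q_{k}+\varepsilon_{k}z)\chi(\varepsilon_{k}z)$. Arguments identical to those in Lemma \ref{lem:crucial} give $w_{k}\rightharpoonup w$ weakly in $H^{1}(\mathbb{R}^{3})$ and strongly in $L^{s}_{\mathrm{loc}}(\mathbb{R}^{3})$ for $2\le s<6$, $\tilde{\psi}_{k}\rightharpoonup\psi_{\infty}(w)$ in $D^{1,2}\cap L^{6}$, and (passing to the limit in $I'_{\varepsilon_{k}}(u_{k})[f_{k}]\to 0$ along $f_{k}(x)=f((x-q_{k})/\varepsilon_{k})$ with $f\in C^{\infty}_{0}(\mathbb{R}^{3})$) that $w$ weakly solves (\ref{eq:PL}). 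The mass bound forces $w\not\equiv 0$, so $w\in\mathcal{N}_{\infty}$ and $I_{\infty}(w)\ge m_{\infty}$. Setting $\tilde{w}_{k}:=w_{k}-w$, the Brezis--Lieb splitting for the local terms together with an analogous splitting for the nonlocal $G_{\infty}$ (using weak $L^{6}$-convergence of $\psi_{\infty}(w_{k})$ to $\psi_{\infty}(w)$ combined with Hardy--Littlewood--Sobolev estimates) yields
\[
I_{\infty}(w_{k})=I_{\infty}(w)+I_{\infty}(\tilde{w}_{k})+o(1)
\]
and the corresponding splitting of the Nehari identity. Combined with $I_{\infty}(w_{k})=I_{\varepsilon_{k}}(u_{k})+o(1)\le m_{\infty}+o(1)$ and the fact that any nontrivial element of $\mathcal{N}_{\infty}$ has energy at least $m_{\infty}$, this forces $I_{\infty}(w)=m_{\infty}$ and $\tilde{w}_{k}\to 0$ strongly in $H^{1}(\mathbb{R}^{3})$ and in $L^{p}(\mathbb{R}^{3})$.

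To conclude I fix $R>0$ so large that $\int_{B(0,R)}\Gamma_{\infty}(w)\,dz>(1-\eta/2)m_{\infty}$, where $\Gamma_{\infty}(v):=\tfrac14|\nabla v|^{2}+\tfrac14 v^{2}+(\tfrac14-\tfrac1p)|v^{+}|^{p}$ is the density satisfying $\int\Gamma_{\infty}(v)\,dz=I_{\infty}(v)$ on $\mathcal{N}_{\infty}$. The strong convergence of the previous paragraph yields $\int_{B(0,R)}\Gamma_{\infty}(w_{k})\,dz\to\int_{B(0,R)}\Gamma_{\infty}(w)\,dz$; the change of variables $x=q_{k}+\varepsilon_{k}z$, together with $R\varepsilon_{k}<r/2$ and $d(q_{k},\partial\Omega)\gg R\varepsilon_{k}$ for $k$ large, gives
\[
\int_{B(q_{k},r/2)\cap\Omega}\Gamma(u_{k})\,dx\ge\int_{B(q_{k},R\varepsilon_{k})}\Gamma(u_{k})\,dx>(1-\eta/2)m_{\infty},
\]
contradicting the contrary assumption at $q=q_{k}$. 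The main obstacle I anticipate is the Brezis--Lieb-type identity $G_{\infty}(w_{k})=G_{\infty}(w)+G_{\infty}(\tilde{w}_{k})+o(1)$, which requires a careful four-term expansion of the Coulomb double integral and weak-continuity estimates for cross products like $\int w\tilde{w}_{k}\psi_{\infty}(w_{k})$; a secondary but genuine subtlety is arranging the Ekeland perturbation in the first paragraph so that it does not disperse the concentration away from the cell $P_{k}$ identified by Lemma \ref{lem:gamma}.
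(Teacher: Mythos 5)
Your overall strategy (argue by contradiction, use Lemma \ref{lem:gamma} to locate a concentration cell, Lemma \ref{lem:crucial} to push it away from $\partial\Omega$, blow up to a nontrivial solution $w$ of (\ref{eq:PL}) with $I_{\infty}(w)\ge m_{\infty}$, then contradict the assumption on a large ball) is the paper's strategy, and your final step on $B(0,R)$ matches the paper's. There is, however, a genuine gap at the very start: to run Ekeland's principle with a \emph{vanishing} derivative bound you need $u_{k}$ to be almost minimizing, i.e. $I_{\varepsilon_{k}}(u_{k})\le m_{\varepsilon_{k}}+\tilde{\delta}_{k}$ with $\tilde{\delta}_{k}\to0$. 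From $I_{\varepsilon_{k}}(u_{k})\le m_{\infty}+\delta_{k}$ you get $\tilde{\delta}_{k}=m_{\infty}-m_{\varepsilon_{k}}+\delta_{k}$, and Remark \ref{rem:limsup} only gives $\limsup_{k}m_{\varepsilon_{k}}\le m_{\infty}$: at this stage nothing prevents $m_{\varepsilon_{k}}$ from staying bounded away from $m_{\infty}$ from below, in which case $\tilde{\delta}_{k}$ does not vanish and your Ekeland perturbation is not small. The paper resolves this with a two-step structure: it first proves the proposition for $u\in\mathcal{N}_{\varepsilon}\cap I_{\varepsilon}^{m_{\varepsilon}+2\delta}$ (where Ekeland does apply), obtains $\lim_{\varepsilon\to0}m_{\varepsilon}=m_{\infty}$ as a byproduct (see (\ref{eq:mepsminfty})), and only then deduces $\mathcal{N}_{\varepsilon}\cap I_{\varepsilon}^{m_{\infty}+\delta}\subset\mathcal{N}_{\varepsilon}\cap I_{\varepsilon}^{m_{\varepsilon}+2\delta}$ for $\varepsilon,\delta$ small. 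You need this (or an independent proof that $\liminf m_{\varepsilon}\ge m_{\infty}$) before your first paragraph is legitimate.

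Separately, the step you yourself flag as the main obstacle --- the Brezis--Lieb splitting of the nonlocal term $G_{\infty}$ --- is unnecessary, and the paper does not perform it. On the Nehari manifold, identity (\ref{eq:I-nehari}) gives $I_{\varepsilon_{k}}(u_{k})=\tfrac{1}{4}\|u_{k}\|_{\varepsilon_{k}}^{2}+(\tfrac{1}{4}-\tfrac{1}{p})|u_{k}^{+}|_{p,\varepsilon_{k}}^{p}$, an expression containing no nonlocal term; after rescaling this equals $\tfrac{1}{4}\|w_{k}\|_{H^{1}}^{2}+(\tfrac{1}{4}-\tfrac{1}{p})|w_{k}^{+}|_{p}^{p}\le m_{\infty}+3\delta_{k}$, and weak lower semicontinuity of both terms, combined with $m_{\infty}\le I_{\infty}(w)$ written in the same form, squeezes the whole chain into equalities; this yields convergence of the norms and hence strong $H^{1}\cap L^{p}$ convergence of $w_{k}$ to a ground state $w$ with no profile decomposition. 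This shortcut also sidesteps your implicit claim $I_{\infty}(w_{k})=I_{\varepsilon_{k}}(u_{k})+o(1)$, which is itself nontrivial: $G_{\varepsilon_{k}}$ is built from the Dirichlet Green's function of $\Omega$ while $G_{\infty}$ uses the free Newtonian kernel, and only weak convergence of $\tilde{\psi}_{k}$ is available at that point of the argument. I recommend replacing your entire splitting paragraph with the paper's lower-semicontinuity argument.
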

\begin{proof}
First, we prove the proposition for $u\in{\mathcal N}_{\varepsilon}\cap I_{\varepsilon}^{m_{\varepsilon}+2\delta}$. 

By contradiction, we assume that there exists $\eta\in(0,1)$ such
that we can find two sequences of vanishing real number $\delta_{k}$
and $\varepsilon_{k}$ and a sequence of functions $\left\{ u_{k}\right\} _{k}$
such that $u_{k}\in{\mathcal N}_{\varepsilon_{k}}$, 
\begin{align}
m_{\varepsilon_{k}} & \le I_{\varepsilon_{k}}(u_{k})=\left(\frac{1}{2}-\frac{1}{p}\right)\|u_{k}\|_{\varepsilon_{k}}^{2}+\omega\left(\frac{1}{4}-\frac{1}{p}\right)G_{\varepsilon_{k}}(u_{k})\label{eq:mepsk}\\
 & =\frac{1}{4}\|u_{k}\|_{\varepsilon_{k}}^{2}+\left(\frac{1}{4}-\frac{1}{p}\right)|u_{k}^{+}|_{p,\varepsilon_{k}}^{p}\le m_{\varepsilon_{k}}+2\delta_{k}\le m_{\infty}+3\delta_{k}\nonumber 
\end{align}
 for $k$ large enough (see Remark \ref{rem:limsup}), and, for any
$q\in\Omega$, 
\[
\int_{B(q,r/2)\cap\Omega}\Gamma(u_{k})\le\left(1-\eta\right)m_{\infty}.
\]
By Ekeland principle and by definition of ${\mathcal N}_{\varepsilon_{k}}$
we can assume 
\begin{equation}
\left|I'_{\varepsilon_{k}}(u_{k})[\varphi]\right|\le\sigma_{k}\|\varphi\|_{\varepsilon_{k}}\text{ where }\sigma_{k}\rightarrow0.\label{eq:ps}
\end{equation}
By Lemma \ref{lem:gamma} there exists a set $P_{k}^{\varepsilon_{k}}\in{\mathcal P}_{\varepsilon_{k}}$
such that 
\[
\frac{1}{\varepsilon_{k}^{3}}\int_{P_{k}^{\varepsilon_{k}}}|u_{k}^{+}|^{p}dx\ge\gamma,
\]
morevoer, by Lemma \ref{lem:crucial} we have that $\frac{d(P_{k}^{\varepsilon_{k}},\partial\Omega)}{\varepsilon_{k}}\rightarrow+\infty$. 

We choose a point $q_{k}\in P_{k}^{\varepsilon_{k}}$ and we define
the set 
\[
\Omega_{\varepsilon_{k}}:=\frac{1}{\varepsilon_{k}}\left(\Omega-q_{k}\right)=\left\{ z\in\mathbb{R}^{3}\ :\ \varepsilon_{k}z+q_{k}\in\Omega\right\} .
\]
We remark that, since $\Omega\supset B(q_{z},d(P_{k}^{\varepsilon_{k}},\partial\Omega))$
and since $\frac{d(P_{k}^{\varepsilon_{k}},\partial\Omega)}{\varepsilon_{k}}\rightarrow+\infty$,
we have $\Omega_{\varepsilon_{k}}\nearrow\mathbb{R}^{3}$. We define,
the function $w_{k}:\mathbb{R}^{3}\rightarrow\mathbb{R}$ as 
\[
w_{k}(z)=\left\{ \begin{array}{ccc}
u_{k}(\varepsilon_{k}z+q_{k}) &  & z\in\Omega_{\varepsilon_{k}}\\
0 &  & \text{elsewhere}
\end{array}\right..
\]
We have that $w_{k}\in H^{1}(\mathbb{R}^{3})$. By equation (\ref{eq:mepsk})
we have 
\[
\|w_{k}\|_{H^{1}(\mathbb{R}^{3})}^{2}=\|u_{k}\|_{\varepsilon_{k}}^{2}\le C.
\]
So $w_{k}\rightarrow w$ weakly in $H^{1}(\mathbb{R}^{3})$ and strongly
in $L_{\text{loc}}^{s}(\mathbb{R}^{3})$ for $2\le s<6$. 

Let $\psi_{k}(x):=\psi_{\varepsilon_{k}}(u_{k})(x)$ where $\psi_{\varepsilon_{k}}(u_{k})$
solves $-\varepsilon_{k}^{2}\Delta v=qu_{k}^{2}\text{ in }\Omega$,
with Dirichlet boundary condition, and again define $\tilde{\psi}_{k}:\mathbb{R}^{3}\rightarrow\mathbb{R}$
as 
\[
\tilde{\psi}_{k}(z)=\left\{ \begin{array}{ccc}
\psi_{k}(\varepsilon_{k}z+q_{k}) &  & z\in\Omega_{\varepsilon_{k}}\\
0 &  & \text{elsewhere}
\end{array}\right..
\]
 and, as in the proof of Lemma \ref{lem:stimaGeps} and by (\ref{eq:mepsk})
we have $\|\tilde{\psi}_{k}\|_{D^{1,2}(\mathbb{R}^{3})}\le C$. So
there exists $\bar{\psi}\in D^{1,2}(\mathbb{R}^{3})$ such that $\tilde{\psi}_{k}\rightharpoonup\bar{\psi}$
weakly in $D^{1,2}(\mathbb{R}^{3})$ and in $L^{6}(\mathbb{R}^{3})$.
We show that $\bar{\psi}=\psi_{\infty}(w)$, $\psi_{\infty}(w)$ being
the solution of $-\Delta v=qw^{2}\text{ in }\mathbb{R}^{3}$. 

Take a function $f\in C_{0}^{\infty}(\mathbb{R}^{3})$. We have $\text{supp}(f)\subset B(0,T)\subset\Omega_{\varepsilon_{k}}$,
for some $T>0$, and for $k$ sufficently large. We define 
\[
f_{k}(x)=f\left(\frac{x-q_{k}}{\varepsilon_{k}}\right)
\]
so $\text{supp}(f_{k})\subset B(q_{k},\varepsilon_{k}T)\subset\Omega$.
Since $\psi_{k}=\psi_{\varepsilon_{k}}(u_{k})$ is a weak solution
of (\ref{eq:psi-eps}), we have $\varepsilon_{k}^{2}\int_{\Omega}\nabla\psi_{k}\nabla f_{k}dx=q\int_{\Omega}u_{k}^{2}f_{k}dx$.
Now, with the change of variables $x=\varepsilon_{k}z+q_{k}$ we have
\[
\frac{1}{\varepsilon_{k}}\int_{\Omega}\nabla\psi_{k}\nabla f_{k}dx=\int_{\Omega_{\varepsilon_{k}}}\nabla\tilde{\psi}_{k}\nabla fdz=\int_{\mathbb{R}^{3}}\nabla\tilde{\psi}_{k}\nabla fdz\rightarrow\int_{\mathbb{R}^{3}}\nabla\bar{\psi}\nabla fdz.
\]
In a similar way 
\[
\frac{1}{\varepsilon_{k}^{3}}\int_{\Omega}u_{k}^{2}f_{k}dx=\int_{\Omega_{\varepsilon_{k}}}w_{k}^{2}fdz=\int_{\mathbb{R}^{3}}w_{k}^{2}fdz\rightarrow\int_{\mathbb{R}^{3}}w^{2}fdz,
\]
hence for any $f\in C_{0}^{\infty}(\mathbb{R}^{3})$ 
\[
\int_{\mathbb{R}^{3}}\nabla\bar{\psi}\nabla fdz=q\int_{\mathbb{R}^{3}}w^{2}fdz
\]
and we have proved that $\bar{\psi}=\psi_{\infty}(w)$, as claimed. 

Moreover, since $\|f_{k}\|_{\varepsilon_{k}}=\|f\|_{H^{1}(\mathbb{R}^{3})}$
and by (\ref{eq:ps}), we have $\left|I'_{\varepsilon_{k}}(u_{k})[f_{k}]\right|\rightarrow0$
as $k\rightarrow\infty$. Also, by the change of variables $x=\varepsilon_{k}z+q_{k}$
we get
\begin{align*}
I'_{\varepsilon_{k}}(u_{k})[f_{k}]= & \frac{1}{\varepsilon_{k}^{3}}\int_{\Omega}\varepsilon_{k}^{2}\nabla u_{k}\nabla f_{k}+u_{k}f_{k}+\omega qu_{k}\psi_{k}f_{k}-(u_{k}^{+})^{p-1}f_{k}d\mu_{g}\\
= & \int_{\Omega_{\varepsilon}}\nabla w_{k}\nabla f+w_{k}f+\omega qw_{k}\tilde{\psi}_{k}f-(w_{k}^{+})^{p-1}fdz\\
= & \int_{\mathbb{R}^{3}}\nabla w_{k}\nabla f+w_{k}f+\omega qw_{k}\tilde{\psi}_{k}f-(w_{k}^{+})^{p-1}fdz\\
\rightarrow & \int_{\mathbb{R}^{3}}\nabla w\nabla f+wf+\omega qw\psi_{\infty}(w)f-(w_{k}^{+})^{p-1}fdz=I'_{\infty}(w)[f]
\end{align*}
since $w_{k}\rightarrow w$ weakly in $H^{1}(\mathbb{R}^{3})$ and
$\tilde{\psi}_{k}\rightharpoonup\psi_{\infty}(w)$ weakly in $D^{1,2}(\mathbb{R}^{3})$
and in $L^{6}(\mathbb{R}^{3})$. So we get that $w$ is a weak solution
of the limit problem (\ref{eq:PL}). By Lemma \ref{lem:gamma} and
by the choice of $q_{k}$ we have that $w\ne0$, so $w>0$, $w\in\mathcal{N}_{\infty}$,
and $I_{\infty}(w)\ge m_{\infty}$. 

By weak convergence of $w_{k}$, by the defintion of $\mathcal{N}_{\infty}$
and by (\ref{eq:mepsk}) we get 
\begin{align*}
m_{\infty} & \le I_{\infty}(w)=\frac{1}{4}\|w\|_{H^{1}}^{2}+\left(\frac{1}{4}-\frac{1}{p}\right)|w^{+}|_{p}^{p}\\
 & \le\liminf_{k\rightarrow\infty}\left(\frac{1}{4}\|w_{k}\|_{H^{1}}^{2}+\left(\frac{1}{4}-\frac{1}{p}\right)|w_{k}^{+}|_{p}^{p}\right)\le\liminf_{k\rightarrow\infty}\left(m_{\infty}+3\delta_{k}\right)=m_{\infty}
\end{align*}
so we have that $w_{k}\rightarrow w$ strongly in $H^{1}(\mathbb{R}^{3})$
and in $L^{p}(\mathbb{R}^{3})$ and that $w$ is a ground state for
the limit problem (\ref{eq:PL}).

Given $T>0$, by the definition of $w_{k}$ we get, for $k$ large
enough
\begin{multline}
\int_{B(0,T)}\left[\frac{1}{4}|\nabla w_{k}|^{2}+\frac{1}{4}|w_{k}|^{2}+\left(\frac{1}{4}-\frac{1}{p}\right)|w_{k}^{+}|^{p}\right]dz\\
=\frac{1}{\varepsilon^{3}}\int_{B(q_{k},\varepsilon_{k}T)}\frac{1}{4\varepsilon_{k}}|\nabla u_{k}|^{2}+\frac{1}{4\varepsilon_{k}^{3}}|u_{k}|^{2}+\frac{1}{\varepsilon_{k}^{3}}\left(\frac{1}{4}-\frac{1}{p}\right)|u_{k}^{+}|^{p}dx\\
=\int_{B(q_{k},\varepsilon_{k}T)}\Gamma(u_{k})dx\le\int_{B(q_{k},r/2)\cap\Omega}\Gamma(u_{k})dx\le\left(1-\eta\right)m_{\infty},\label{eq:contr}
\end{multline}
we remark here that, eventually, $B(q_{k},\varepsilon_{k}T)\subset\Omega$,
since $\frac{d(q_{k},\partial\Omega)}{\varepsilon_{k}}\ge\frac{d(P_{k}^{\varepsilon_{k}},\partial\Omega)}{\varepsilon_{k}}\rightarrow\infty$
by Lemma \ref{lem:crucial}. 

On the other hand, $w_{k}\rightarrow w$ and $\tilde{\psi}_{k}\rightarrow\psi_{\infty}(w)$
in $L^{2}(B(0,T))$ for any $T>0$ and for $2\le s<6$. Thus, since
$m_{\infty}=I_{\infty}(w)=\left(\frac{1}{2}-\frac{1}{p}\right)|w^{+}|^{p}-\frac{\omega}{4}G(w)$,
for any $\eta$ it is possible to choose $T$ such that
\[
\int_{B(0,T)}\left[\frac{1}{4}|\nabla w_{k}|^{2}+\frac{1}{4}|w_{k}|^{2}+\left(\frac{1}{4}-\frac{1}{p}\right)|w_{k}^{+}|^{p}\right]dz>(1-\eta)m_{\infty},
\]
which contradict (\ref{eq:contr}), so the lemma is proved for $u\in{\mathcal N}_{\varepsilon}\cap I_{\varepsilon}^{m_{\varepsilon}+2\delta}$.

The above arguments also prove that 
\[
\liminf_{k\rightarrow\infty}m_{\varepsilon_{k}}\ge\lim_{k\rightarrow\infty}I_{\varepsilon_{k}}(u_{k})=m_{\infty}.
\]
and, in light of (\ref{eq:limsup}), this leads to 
\begin{equation}
\lim_{\varepsilon\rightarrow0}m_{\varepsilon}=m_{\infty}.\label{eq:mepsminfty}
\end{equation}
Hence, when $\varepsilon,\delta$ are small enough, ${\mathcal N}_{\varepsilon}\cap I_{\varepsilon}^{m_{\infty}+\delta}\subset{\mathcal N}_{\varepsilon}\cap I_{\varepsilon}^{m_{\varepsilon}+2\delta}$
and the general claim follows.
\end{proof}
\begin{prop}
There exists $\delta_{0}\in(0,m_{\infty})$ such that for any $\delta\in(0,\delta_{0})$
and any $\varepsilon\in(0,\varepsilon(\delta_{0})$ (see Proposition
\ref{prop:phieps}), for every function $u\in{\mathcal N}_{\varepsilon}\cap I_{\varepsilon}^{m_{\infty}+\delta}$
it holds $\beta(u)\in\Omega^{+}$. Moreover the composition 
\[
\beta\circ\Phi_{\varepsilon}:\Omega^{-}\rightarrow\Omega^{+}
\]
 is s homotopic to the immersion $i:\Omega^{-}\rightarrow\Omega^{+}$ 
\end{prop}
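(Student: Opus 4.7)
The plan has two independent pieces: showing $\beta(u)\in\Omega^{+}$ for every low energy $u$, and constructing an explicit homotopy between $\beta\circ\Phi_{\varepsilon}$ and the inclusion $i$. The first piece will be reduced to Proposition \ref{prop:conc} (concentration of $\Gamma(u)$ at a point of $\Omega$), the second to the fact that $\Phi_{\varepsilon}(\xi)$ is a bump concentrated at $\xi$, so its barycenter is close to $\xi$.

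For the first part, fix $\eta\in(0,1)$ (to be chosen) and apply Proposition \ref{prop:conc} to obtain $q=q(u)\in\Omega$ with
\[
\int_{B(q,r/2)\cap\Omega}\Gamma(u)\,dx>(1-\eta)m_{\infty}.
\]
Because $u\in{\mathcal N}_{\varepsilon}$, identity (\ref{eq:I-nehari}) gives $\int_{\Omega}\Gamma(u)\,dx=I_{\varepsilon}(u)\le m_{\infty}+\delta$, so the mass outside the ball is at most $\eta m_{\infty}+\delta$. Writing
\[
\beta(u)-q=\frac{\int_{\Omega}(x-q)\Gamma(u)\,dx}{\int_{\Omega}\Gamma(u)\,dx}
\]
and splitting the numerator into the two pieces, bounding $|x-q|\le r/2$ on the ball and $|x-q|\le\mathrm{diam}(\Omega)$ on the complement, while bounding the denominator below by $m_{\varepsilon}$, yields
\[
|\beta(u)-q|\le\frac{r}{2}+\frac{\mathrm{diam}(\Omega)(\eta m_{\infty}+\delta)}{m_{\varepsilon}}.
\]
Since $m_{\varepsilon}\to m_{\infty}>0$ by (\ref{eq:mepsminfty}), choosing $\eta$ small, then $\delta_{0}$ small, and finally $\varepsilon_{0}(\delta_{0})$ as in Proposition \ref{prop:phieps} makes the right-hand side strictly less than $r$. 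Since $q\in\Omega$, this forces $d(\beta(u),\Omega)<r$, hence $\beta(u)\in\Omega^{+}$.

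For the homotopy, the main step is the uniform limit $\beta(\Phi_{\varepsilon}(\xi))\to\xi$ as $\varepsilon\to0$, uniformly for $\xi\in\Omega^{-}$. Using $\Phi_{\varepsilon}(\xi)=t_{\varepsilon}(W_{\xi,\varepsilon})W_{\xi,\varepsilon}$ and performing the change of variables $x=\xi+\varepsilon z$ in both integrals defining $\beta$, the rescaled density $\Gamma(\Phi_{\varepsilon}(\xi))$ converges to a radial integrable profile built from $U$, whose first moment vanishes. The convergence is uniform in $\xi\in\Omega^{-}$ thanks to Remark \ref{w}, Lemma \ref{lem:stimaGeps} and $t_{\varepsilon}(W_{\xi,\varepsilon})\to1$ (all of which are uniform in $\xi$ by the proof of Proposition \ref{prop:phieps}), so $\beta(\Phi_{\varepsilon}(\xi))=\xi+o(1)$ uniformly. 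For $\varepsilon$ small, the segment
\[
H(s,\xi)=(1-s)\xi+s\,\beta(\Phi_{\varepsilon}(\xi)),\qquad s\in[0,1],
\]
therefore lies within $o(1)$ of $\xi\in\Omega^{-}\subset\Omega$, so in particular inside $\Omega^{+}$. This $H$ furnishes a continuous homotopy in $\Omega^{+}$ from $\beta\circ\Phi_{\varepsilon}$ to the inclusion $i\colon\Omega^{-}\to\Omega^{+}$.

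The main obstacle is the quantitative displacement estimate in the first step: one must fix $\eta$, then $\delta_{0}$, and only afterwards $\varepsilon_{0}(\delta_{0})$, so that simultaneously (\ref{eq:mepsminfty}) gives $m_{\varepsilon}\ge m_{\infty}/2$ (say), Proposition \ref{prop:conc} is applicable, and the displacement is strictly below $r$. The homotopy step, once the uniform rescaling limit is established, is essentially routine and mirrors the asymptotics already carried out in Section \ref{sec:phieps}.
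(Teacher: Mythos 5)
Your argument for $\beta(u)\in\Omega^{+}$ is essentially the paper's: apply Proposition \ref{prop:conc}, split the first-moment integral over $B(q,r/2)$ and its complement, bound the mass outside the ball by $\eta m_{\infty}+\delta$, and bound the denominator from below (you use $I_{\varepsilon}(u)\ge m_{\varepsilon}$ together with (\ref{eq:mepsminfty}), the paper uses the concentration bound itself with a factor $2$; both work, and the order of choices $\eta$, then $\delta_{0}$, then $\varepsilon_{0}$ is the right one). For the homotopy the paper only writes ``standard'' and you supply the standard argument correctly; the one inaccuracy is the claim that the rescaled profile is radial with vanishing first moment (radiality of the ground state $U$ is not known here), but this is harmless because after the change of variables $x=\xi+\varepsilon z$ the first-moment correction carries a factor $\varepsilon$ and is finite (compact support of $W_{\xi,\varepsilon}$ plus integrability of the limiting density), so $\beta(\Phi_{\varepsilon}(\xi))=\xi+O(\varepsilon)$ uniformly in any case.
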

\begin{proof}
By Proposition \ref{prop:conc}, for any function $u\in{\mathcal N}_{\varepsilon}\cap I_{\varepsilon}^{m_{\infty}+\delta}$,
for any $\eta\in(0,1)$ and for $\varepsilon,\delta$ small enough,
we can find a point $q=q(u)\in\Omega$ such that 
\[
\int_{B(q,r/2)\cap\Omega}\Gamma(u)>\left(1-\eta\right)m_{\infty}.
\]
Moreover, since $u\in{\mathcal N}_{\varepsilon}\cap I_{\varepsilon}^{m_{\infty}+\delta}$
we have 
\[
I_{\varepsilon}(u)=\int_{\Omega}\Gamma(u)\le m_{\infty}+\delta.
\]
Hence, since $\Gamma(u)\ge0$,
\begin{eqnarray*}
|\beta(u)-q| & \le & \frac{\left|\int_{\Omega}(x-q)\Gamma(u)\right|}{\int_{\Omega}\Gamma(u)}\\
 & \le & \frac{\left|\int_{B(q,r/2)}(x-q)\Gamma(u)\right|}{\int_{\Omega}\Gamma(u)}+\frac{\left|\int_{\Omega\smallsetminus B(q,r/2)}(x-q)\Gamma(u)\right|}{\int_{\Omega}\Gamma(u)}\\
 & \le & \frac{r}{2}+2\text{\ diam}(\Omega)\left[\frac{\delta+\eta m_{\infty}}{m_{\infty}+\delta}\right],
\end{eqnarray*}
and the second term can be made arbitrarily small, choosing $\delta,\eta$
sufficiently small. The second claim of the theorem is standard.
\end{proof}

\section{The set $T_{\varepsilon}$\label{sec:Teps}}

In this section we construct a contractible set in the space $H_{0}^{1}(\Omega)$.
This will prove the existence of another solution with higher energy.

Let $V\in C_{0}^{\infty}(\mathbb{R}^{3})$, $V\ge0$ a non identically
zero function. Take a point $q_{0}\in\Omega^{-}$ and define
\[
v_{\varepsilon}(x)=V\left(\frac{x-q_{0}}{\varepsilon}\right).
\]
 Since $V$ is compactly supported, $v_{\varepsilon}\in H_{0}^{1}(\Omega)$
eventually in $\varepsilon$. We define the set of functions 
\[
C_{\varepsilon}:=\left\{ u(x)=\theta v_{\varepsilon}+(1-\theta)W_{q,\varepsilon}\text{ for }q\in\overline{\Omega^{-}}\ ,\theta\in[0,1]\right\} ,
\]
where $W_{q,\varepsilon}$ is defined as in (\ref{eq:Weps}). 

We have that $C_{\varepsilon}$ is a compact, contractible set in
$H_{0}^{1}(\Omega)$. Now we define
\[
T_{\varepsilon}:=\left\{ t_{\varepsilon}(u)u\ :\ u\in C_{\varepsilon}\right\} 
\]
where $t_{\varepsilon}(u)$ is the unique positive value such that
$t_{\varepsilon}(u)u\in\mathcal{N}_{\varepsilon}$ as in Lemma \ref{lem:nehari}.
Since $t_{\varepsilon}(u)$ is a continuous function, we have also
that $T_{\varepsilon}$ is a compact contractible set in $\mathcal{N}_{\varepsilon}$.
Also, we point out that every function in $T_{\varepsilon}$ is positive
by definition. We define
\[
c_{\varepsilon}:=\max_{u\in T_{\varepsilon}}I_{\varepsilon}(u).
\]

\begin{lem}
\label{lem:C}There exists $C\in\mathbb{R}$ such that $c_{\varepsilon}\le C$
for $\varepsilon$ sufficiently small.
\end{lem}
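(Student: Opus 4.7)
The plan is to bound $I_\varepsilon$ uniformly from above on $T_\varepsilon$ by controlling $\|u\|_\varepsilon$, $|u^+|_{\varepsilon,p}$, and $G_\varepsilon(u)$ for $u\in C_\varepsilon$, and then showing that the Nehari scaling $t_\varepsilon(u)$ remains uniformly bounded.

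First I would establish uniform upper bounds for the relevant norms of $u\in C_\varepsilon$. For $W_{q,\varepsilon}$ these follow from Remark \ref{w} and Lemma \ref{lem:stimaGeps}, which give $\|W_{q,\varepsilon}\|_\varepsilon\to\|U\|_{H^1}$, $|W_{q,\varepsilon}|_{\varepsilon,p}\to\|U\|_p$, and $G_\varepsilon(W_{q,\varepsilon})\to G(U)$, uniformly in $q\in\overline{\Omega^-}$. For the fixed profile $v_\varepsilon(x)=V((x-q_0)/\varepsilon)$, the change of variable $z=(x-q_0)/\varepsilon$ gives exactly $\|v_\varepsilon\|_\varepsilon^2=\|V\|_{H^1(\mathbb{R}^3)}^2$ and $|v_\varepsilon|_{\varepsilon,p}^p=\|V\|_{L^p(\mathbb{R}^3)}^p$, and the argument of Lemma \ref{lem:stimaGeps} adapts verbatim to yield $G_\varepsilon(v_\varepsilon)\to G(V)$. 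Since every $u\in C_\varepsilon$ is a convex combination of these two profiles, the triangle inequality (and elementary convexity for $G_\varepsilon$, using that $\psi_\varepsilon$ depends quadratically on $u$) gives uniform upper bounds on $\|u\|_\varepsilon$, $|u^+|_{\varepsilon,p}$, and $G_\varepsilon(u)$.

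Next I would prove a uniform positive lower bound for $|u^+|_{\varepsilon,p}$. Since $V\ge 0$ and $U>0$, both $v_\varepsilon$ and $W_{q,\varepsilon}$ are nonnegative, hence $u=u^+\ge 0$. Pointwise $u\ge \theta v_\varepsilon$ and $u\ge (1-\theta)W_{q,\varepsilon}$, so
\[
|u|_{\varepsilon,p}\ge \max\bigl\{\theta\,|v_\varepsilon|_{\varepsilon,p},\,(1-\theta)\,|W_{q,\varepsilon}|_{\varepsilon,p}\bigr\}\ge \frac{|v_\varepsilon|_{\varepsilon,p}\,|W_{q,\varepsilon}|_{\varepsilon,p}}{|v_\varepsilon|_{\varepsilon,p}+|W_{q,\varepsilon}|_{\varepsilon,p}},
\]
and for $\varepsilon$ small enough the right-hand side is uniformly bounded below by a positive constant $c$, independently of $\theta\in[0,1]$ and $q\in\overline{\Omega^-}$. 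The Nehari equation
\[
\|u\|_\varepsilon^2+t^2\omega G_\varepsilon(u)=t^{p-2}|u^+|_{\varepsilon,p}^p,
\]
together with the hypothesis $p>4$, the uniform upper bounds on $\|u\|_\varepsilon$ and $G_\varepsilon(u)$, and the uniform lower bound $|u^+|_{\varepsilon,p}\ge c$, then forces $t_\varepsilon(u)\le T$ for some $T$ independent of $\varepsilon,\theta,q$: for $t$ larger than a constant depending only on these bounds the right-hand side dominates. Plugging this into the Nehari representation from Lemma \ref{lem:nehari},
\[
I_\varepsilon(t_\varepsilon(u)u)=\frac{1}{4}t_\varepsilon(u)^2\|u\|_\varepsilon^2+\Bigl(\frac{1}{4}-\frac{1}{p}\Bigr)t_\varepsilon(u)^p|u^+|_{\varepsilon,p}^p,
\]
every factor is uniformly bounded, yielding the desired $c_\varepsilon\le C$.

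The main obstacle is the uniform positive lower bound on $|u^+|_{\varepsilon,p}$: without it, $t_\varepsilon(u)$ could diverge as $(\theta,q)$ varies, and the uniform bound on $c_\varepsilon$ would fail. The essential ingredient that prevents this is the positivity of both $v_\varepsilon$ and $W_{q,\varepsilon}$, ruling out sign cancellation in the convex combination; everything else reduces to the scaling identities in Remark \ref{w} and Lemma \ref{lem:stimaGeps} and their straightforward analogues for $v_\varepsilon$.
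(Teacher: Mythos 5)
Your proposal is correct and follows essentially the same route as the paper: uniform upper bounds on $\|u\|_{\varepsilon}$, $|u|_{\varepsilon,p}$ and $G_{\varepsilon}(u)$ for $u\in C_{\varepsilon}$, a uniform positive lower bound on $|u^{+}|_{\varepsilon,p}$ coming from the nonnegativity of $v_{\varepsilon}$ and $W_{q,\varepsilon}$, a resulting uniform bound on $t_{\varepsilon}(u)$ from the Nehari equation, and substitution into the Nehari representation of $I_{\varepsilon}$. The only cosmetic differences are that you prove just the upper bound on $t_{\varepsilon}(u)$ (which indeed suffices here, whereas the paper records two-sided bounds) and that the bound on $G_{\varepsilon}$ is more transparently obtained from the estimate $G_{\varepsilon}(u)\le C|u|_{\varepsilon,12/5}^{4}$ than from a convexity argument.
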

\begin{proof}
Since $\theta\in[0,1],$ by rescaling and by Remark \ref{w} we have
that 
\[
\|\theta v_{\varepsilon}+(1-\theta)W_{q,\varepsilon}\|_{\varepsilon}\le\|v_{\varepsilon}\|_{\varepsilon}+\|W_{q,\varepsilon}\|_{\varepsilon}\rightarrow\|V\|_{H_{0}^{1}(\mathbb{R}^{3})}+\|U\|_{H_{0}^{1}(\mathbb{R}^{3})}
\]
so 
\[
\|\theta v_{\varepsilon}+(1-\theta)W_{q,\varepsilon}\|_{\varepsilon}\le2\left(\|V\|_{H_{0}^{1}(\mathbb{R}^{3})}+\|U\|_{H_{0}^{1}(\mathbb{R}^{3})}\right),
\]
 and in the same way 
\[
|\theta v_{\varepsilon}+(1-\theta)W_{q,\varepsilon}|_{\varepsilon,p}\le2\left(\|V\|_{L^{p}(\mathbb{R}^{3})}+\|U\|_{L^{p}(\mathbb{R}^{3})}\right).
\]
Moreover, since $v_{\varepsilon}\ge0$ and $W_{q,\varepsilon}\ge0$
we have
\[
|\theta v_{\varepsilon}+(1-\theta)W_{q,\varepsilon}|_{\varepsilon,p}\ge\max\left\{ \theta|v_{\varepsilon}|_{\varepsilon,p},(1-\theta)|W_{q,\varepsilon}|_{\varepsilon,p}\right\} \ge\frac{1}{2}\min\left\{ |v_{\varepsilon}|_{\varepsilon,p},|W_{q,\varepsilon}|_{\varepsilon,p}\right\} 
\]
and, by Remark \ref{w}
\[
|\theta v_{\varepsilon}+(1-\theta)W_{q,\varepsilon}|_{\varepsilon,p}\ge\frac{1}{4}\min\left\{ \|V\|_{L^{p}(\mathbb{R}^{3})},\|U\|_{L^{p}(\mathbb{R}^{3})}\right\} .
\]
Similarly 
\[
\|\theta v_{\varepsilon}+(1-\theta)W_{q,\varepsilon}\|_{\varepsilon}\ge\frac{1}{4}\min\left\{ \|V\|_{L^{2}(\mathbb{R}^{3})},\|U\|_{L^{2}(\mathbb{R}^{3})}\right\} .
\]
Finally, arguing as in Lemma \ref{lem:stimaGeps} we have 
\begin{align*}
\left|G_{\varepsilon}(\theta v_{\varepsilon}+(1-\theta)W_{q,\varepsilon})\right| & =\left|\frac{1}{\varepsilon^{3}}\int_{\Omega}\left(\theta v_{\varepsilon}+(1-\theta)W_{q,\varepsilon}\right)^{2}\psi_{\varepsilon}dx\right|\\
 & \le|\theta v_{\varepsilon}+(1-\theta)W_{q,\varepsilon}|_{\varepsilon,12/5}^{2}|\psi_{\varepsilon}|_{\varepsilon,6}
\end{align*}
where $\psi_{\varepsilon}$ is the solution of $-\varepsilon^{2}\Delta\psi_{\varepsilon}=q(\theta v_{\varepsilon}+(1-\theta)W_{q,\varepsilon})$.
Moreover 
\[
\frac{1}{\varepsilon}\int_{\Omega}|\nabla\psi_{\varepsilon}|^{2}=\frac{q}{\varepsilon^{3}}\int(\theta v_{\varepsilon}+(1-\theta)W_{q,\varepsilon})\le|\theta v_{\varepsilon}+(1-\theta)W_{q,\varepsilon}|_{\varepsilon,12/5}^{2}|\psi_{\varepsilon}|_{\varepsilon,6}
\]
and, since there exists a constant $C$ which does not depend on $\varepsilon$
such that $|\psi_{\varepsilon}|_{\varepsilon,6}\le C\left(\frac{1}{\varepsilon}\int_{\Omega}|\nabla\psi_{\varepsilon}|^{2}\right)^{1/2},$
we get 
\[
\left|G_{\varepsilon}(\theta v_{\varepsilon}+(1-\theta)W_{q,\varepsilon})\right|\le C|\theta v_{\varepsilon}+(1-\theta)W_{q,\varepsilon}|_{\varepsilon,\frac{12}{5}}^{4}\le C(\|V\|_{L^{\frac{12}{5}}(\mathbb{R}^{3})}^{4}+\|U\|_{L^{\frac{12}{5}}(\mathbb{R}^{3})}^{4})
\]
uniformly in $\varepsilon$. Now, given $u\in C^{\varepsilon}$ we
have (see Lemma \ref{lem:nehari}) that $t_{\varepsilon}(u)$ is the
unique positive solution of 
\[
\|u\|_{\varepsilon}^{2}+t^{2}\omega G_{\varepsilon}(u)-t^{p-2}|u|_{\varepsilon,p}=0,
\]
 and by the above estimates we conlclude that there exists two constants
$c_{1},c_{2}>0$ independent on $\varepsilon$ and $u\in C_{\varepsilon}$
such that $c_{1}\le t_{\varepsilon}(u)\le c_{2}$. At this point,
for all $t_{\varepsilon}(u)u\in T_{\varepsilon}$ we have 
\[
I_{\varepsilon}(t_{\varepsilon}(u)u)=\frac{t_{\varepsilon}^{2}(u)}{4}\|u\|_{\varepsilon}^{2}+\left(\frac{1}{4}-\frac{1}{p}\right)t_{\varepsilon}(u)^{p}|u^{+}|_{p,\varepsilon}^{p}\le C
\]
 for some constant $C,$ and the proof follows.
\end{proof}


\begin{thebibliography}{References}
\bibitem{AR}A. Ambrosetti, D. Ruiz, \emph{Multiple bound states for
the Schroedinger-Poisson problem}, Commun. Contemp. Math. \textbf{10}
(2008) 391\textendash 404

\bibitem{AP}A. Azzollini, A. Pomponio, \emph{Ground state solutions
for the nonlinear Schroedinger-Maxwell equations}, J. Math. Anal.
Appl. \textbf{345} (2008) no. 1, 90\textendash 108

\bibitem{BJL} J. Bellazzini, L. Jeanjean, T. Luo, \emph{Existence
and instability of standing waves with prescribed norm for a class
of Schroedinger-Poisson equations}, Proc. London Math. Soc. \textbf{107}
(2013) 303\textendash 339

\bibitem{BBM}V. Benci, C. Bonanno, A.M. Micheletti, \emph{On the
multiplicity of solutions of a nonlinear elliptic problem on Riemannian
manifolds}, Journal of Functional Analysis \textbf{252} (2007) 464\textendash 489.

\bibitem{BC1}V. Benci, G. Cerami, \emph{The effect of the domain
topology on the number of positive solutions of nonlinear elliptic
problems}, Arch. Ration. Mech. Anal. \textbf{114} (1991) 79\textendash 93.

\bibitem{BF}V. Benci, D. Fortunato, \emph{An eigenvalue problem for
the Schroedinger-Maxwell equations}, Topol. Methods Nonlinear Anal.
\textbf{11} (1998), no. 2, 283\textendash 293.

\bibitem{BDM}D. Bonheure, J. Di Cosmo, C. Mercuri, \emph{Concentration
on circles for nonlinear Schrödinger-Poisson systems with unbounded
potentials vanishing at infinity}. Commun. Contemp. Math. \textbf{14}
(2012), no. 2, 31 pp. 

\bibitem{DW1}T. D\textquoteright Aprile, J. Wei, \emph{Layered solutions
for a semilinear elliptic system in a ball}, J. Differential Equations
\textbf{226} (2006) , no. 1, 269\textendash 294. 

\bibitem{DW2}T. D\textquoteright Aprile, J. Wei, \emph{Clustered
solutions around harmonic centers to a coupled elliptic system}, Ann.
Inst. H. Poincaré Anal. Non Linéaire \textbf{24} (2007), no. 4, 605\textendash 628.

\bibitem{D}P. D\textquoteright Avenia, \emph{Non-radially symmetric
solutions of nonlinear Schroedinger equation coupled with Maxwell
equations}, Adv. Nonl. Studies 2 (2002), 177\textendash 192

\bibitem{DS}P. D\textquoteright Avenia, M. Squassina, \emph{Soliton
dynamics for the Schroedinger-Newton system}, Math. Models Methods
Appl. Sci. \textbf{24} (2014), 553\textendash 572.

\bibitem{EL}M. J. Esteban, P. L. Lions, \emph{Existence and non-existence
results for semilinear elliptic problems in unbounded domains}, Proc,
Royal Soc, Edinburgh Sect. A, \textbf{93} (1982), 1\textendash 14

\bibitem{GM10}M. Ghimenti, A.M. Micheletti, \emph{Positive solutions
of singularly perturbed nonlinear elliptic prolem on Riemannian manifolds
with boundary}, Topol. Meth. Nonlinear Analysis, \textbf{35} (2010),
319\textendash 337 

\bibitem{GM1}M. Ghimenti, A.M. Micheletti, \emph{Number and profile
of low energy solutions for singularly perturbed Klein Gordon Maxwell
systems on a Riemannian manifold}, J. Differential Equations \textbf{256
}(2014), no. 7, 2502\textendash{} 2525.

\bibitem{GM15b}M. Ghimenti, A.M. Micheletti, \emph{A multiplicity
result for double singularly perturbed elliptic systems}, J. Fixed
Point Theory Appl. \textbf{17} (2015) 123\textendash 136

\bibitem{He}X. He, \emph{Multiplicity and concentration of positive
solutions for the Schr\"{o}dinger\textendash Poisson equations}, Z.
Angew. Math. Phys. \textbf{62} (2011), 869\textendash 889.

\bibitem{HZ}X. He, W. Zou, \emph{Existence and concentration of ground
states for Schrödinger-Poisson equations with critical growth}, J.
Math. Phys. \textbf{53} (2012), 19 pp. 

\bibitem{IR}I. Ianni, D. Ruiz, \emph{Ground and bound states for
a static Schrodinger\textendash Poisson\textendash Slater problem},
Comm, Contemporary Mathematics \textbf{14} (01), 22 pages.

\bibitem{IV}I. Ianni, G. Vaira, \emph{On concentration of positive
bound states for the Schroedinger-Poisson problem with potentials},
Adv. Nonlinear Stud.\textbf{ 8} (2008), no. 3, 573\textendash 595. 

\bibitem{K}Kikuchi, \emph{On the existence of solutions for a elliptic
system related to the Maxwell-Schroedinger equations}, Non- linear
Anal. \textbf{67} (2007) 1445\textendash 1456.

\bibitem{MV}V. Moroz, J. Van Schaftingen, \emph{Groundstates of nonlinear
Choquard equations: existence, qualitative properties and decay asymptotics},
Journal of Functional Analysis \textbf{265}, 153\textendash 184.

\bibitem{PS}L. Pisani, G. Siciliano, \emph{Note on a Schroedinger-Poisson
system in a bounded domain}, Appl. Math. Lett. \textbf{21} (2008),
no. 5, 521\textendash 528. 

\bibitem{R}D. Ruiz, \emph{Semiclassical states for coupled Schroedinger-Maxwell
equations}: Concentration around a sphere, Math. Models Methods Appl.
Sci. \textbf{15} (2005), 141\textendash 164. 

\bibitem{R10}D. Ruiz, \emph{On the Schrödinger\textendash Poisson\textendash Slater
System: behavior of minimizers, radial and nonradial cases}, Arch.
Rational Mech. Anal. \textbf{198} (2010) 349\textendash 368

\bibitem{S}G. Siciliano, \emph{Multiple positive solutions for a
Schroedinger-Poisson-Slater system}, J. Math. Anal. Appl. \textbf{365}
(2010), no. 1, 288\textendash 299. 

\bibitem{WTXZ}J. Wang, L. Tian, J. Xu, F. Zhang, \emph{Existence
and concentration of positive solutions for semilinear Schrödinger\textendash Poisson
systems in $\mathbb{R}^{3}$}, Calc. Var. Partial Diff. Eq., (2013)
\textbf{48}, 243\textendash 273

\bibitem{Y}M. Yang, \emph{Concentration of Positive Ground State
Solutions for Schrödinger\textendash Maxwell Systems with Critical
Growth}, Adv. Nonlinear Studies \textbf{16} (2016) 389\textendash 408
\end{thebibliography}
\end{document}